\newtheorem{theorem}{Theorem}
\newtheorem{proposition}[theorem]{Proposition}
\newtheorem{lemma}[theorem]{Lemma}
\newtheorem{corollary}[theorem]{Corollary}
\newtheorem{remark}[theorem]{Remark}
\newtheorem{definition}[theorem]{Definition}
\newtheorem{examples}[theorem]{Examples}
\numberwithin{theorem}{section}
\newcommand{\eps}{\varepsilon}
\newcommand{\ii}{\mathbf i}
\newcommand{\dd}{\mathrm d}
\renewcommand{\P}{\mathcal P}
\newcommand{\D}{\mathcal D}
\begin{document}
\title{Eigenvalues of Heckman-Polychronakos operators}

\author{Charles Dunkl}

\address[Charles Dunkl]{University of Virginia}
\email{cfd5z@virginia.edu}

\author{Vadim Gorin}

\address[Vadim Gorin]{University of California at Berkeley}
\email{vadicgor@gmail.com}

\date{August 9, 2025}

\maketitle

\begin{abstract}
  Heckman-Polychronakos operators form a prominent family of commuting differential-difference operators defined in terms of the Dunkl operators $\D_i$ as $\P_m= \sum_{i=1}^N (x_i \D_i)^m$. They have been known since 1990s in connection with trigonometric Calogero-Moser-Sutherland Hamiltonian and Jack symmetric polynomials. We explicitly compute the eigenvalues of these operators for symmetric and skew-symmetric eigenfunctions, as well as partial sums of eigenvalues for general polynomial eigenfunctions.
\end{abstract}


\section{Introduction}

Our paper is motivated by two directions. On one hand, the theory of \emph{quantum integrable systems}, or more generally \emph{spectral theory}, seeks to construct families of commuting operators and to understand the structure of their eigenvalues and eigenfunctions. On the other hand, \emph{integrable probability} often exploits such operators to extract information about stochastic systems and their asymptotic behavior.

We investigate a family of differential-difference operators acting on polynomials in $N$ variables $x_1,\dots,x_N$, known as the \emph{Heckman–Polychronakos operators}, which were first introduced in \cite{heckman1991elementary,polychronakos1992exchange} in connection with the study of the (trigonometric) Calogero–Moser–Sutherland Hamiltonian. The key properties of interest for these operators were that they commute with each other and with the Calogero-Moser-Sutherland Hamiltonian, and that Jack symmetric polynomials (whose extensive theory was developed just a couple of years before that, see \cite{stanley1989some}, \cite[Chapter VI]{macdonald1998symmetric}) are their eigenfunctions.

Shortly after their introduction, two important developments occurred. First, \cite{cherednik1991unification,cherednik1994integration} introduced an alternative family of \emph{Cherednik operators} satisfying the same three properties. Second, the theory was lifted to the level of Macdonald polynomials, of which Jack polynomials are a degeneration (cf.\ \cite[Chapter 6]{macdonald1998symmetric}). In the following years Cherednik operators and their Macdonald generalizations played central roles in many developments, while the Heckman-Polychronakos operators received significantly less attention (cf.\ \cite{chalykh2024dunkl} for a recent historical overview). As a result, even the most basic properties of the latter operators, such as explicit formulas for their eigenvalues, remain poorly understood. The goal of this paper is to help fill this gap in the literature and to reinitiate the study of the Heckman–Polychronakos operators.

Some relationships between Cherednik and Heckman–Polychronakos operators are discussed in \cite[Section 5]{sergeev2015jack}. Both families of operators can be constructed via Newton power sums of certain "basic" operators. However, these basic operators exhibit different properties in the two theories: in Cherednik theory, the basic operators commute with each other, but their definition is not compatible with the natural action of the symmetric group $S_N$ on $(x_1, \dots, x_N)$; in Heckman–Polychronakos theory, the basic operators are defined in an $S_N$-equivariant way, but at the cost of losing commutativity.\footnote{We emphasize that although the basic operators $x_i\D_i$ do not commute, the Heckman–Polychronakos operators themselves do commute; see Definition \ref{Definition_HP_operator}, Lemma \ref{Lemma_commute}, and \eqref{eq_commutator}.} The two parallel choices of basic operators in Macdonald $(q,t)$ theory are further discussed in \cite{nazarov2019cherednik}, especially in Section~2.4.

In the semiclassical limit,\footnote{This limit is related to the degeneration of a Lie group into a Lie algebra and to the elementary transition $\lim\limits_{\eps \to 0}(1+\eps x)^{\eps^{-1}y} = \exp(xy)$.} Jack polynomials degenerate into multivariate Bessel functions (see, e.g., \cite[Section 4]{okounkov1997shifted}), and the operators we study degenerate in parallel to objects related to the rational (rather than trigonometric) Calogero–Moser–Sutherland Hamiltonian. Interestingly, in this limit the Cherednik and Heckman–Polychronakos operators merge into a single family: their basic operators converge to the Dunkl operators of \cite{dunkl1989differential}, which both commute and are $S_N$-equivariant.

Switching to the integrable probability side, recent works \cite{benaych2022matrix,xu2023rectangular,GXZ2024,KX2024} have made intensive use of power sums of Dunkl operators in the study of $\beta$-ensembles in random matrix theory. These studies address problems such as the addition of matrices, corner cutting, and the analysis of classical random matrix ensembles. Typical questions in this area concern the asymptotic behavior of individual eigenvalues, as well as more intricate functions of eigenvalues that capture their joint distribution as the matrix size tends to infinity. For the operators to be useful in such asymptotic analyses, two features are particularly important. First, the operators should have reasonably simple combinatorial expressions, allowing their action on explicit functions to be tractable in asymptotic regimes. Second, the eigenvalues of the operators correspond to observables of the stochastic system of interest, and thus should also be reasonably explicit. The power sums of Dunkl operators satisfy both of these criteria.

The random matrix distributions discussed above have natural discrete analogues, extensively studied in \cite{kerov2000anisotropic, borodin2005z, gorin2015multilevel,dolega2016gaussian,borodin2017gaussian, dolega2019gaussian, guionnet2019rigidity,huang2021law,dimitrov2022asymptotics,cuenca2023universality,moll2023gaussian, dimitrov2024global,gorin2024dynamical}, and closely connected to Jack polynomials. Many questions posed for matrix $\beta$-ensembles have discrete counterparts, and similar techniques may be applied to analyze them. This, however, requires lifting the power sums of Dunkl operators to the discrete setting --- leading naturally to either Cherednik or Heckman–Polychronakos operators. It is not yet clear which of the two classes is better suited for asymptotic analysis, and ideally, one would like to explore both.

The desire to study asymptotic properties of these stochastic systems was the original motivation behind our investigation of Heckman–Polychronakos operators. However, their theory turned out to be far richer than expected, revealing many additional structural features and open questions that remain to be explored.

\medskip

The rest of the paper is organized as follows. In Section \ref{Section_HP_operators} we formally define the Heckman-Polychronakos operators. In Section \ref{Section_sym_eigenfunction} we present a formula for their eigenvalues on symmetric eigenfunctions. In Section \ref{Section_classification} we propose a classification of general polynomial eigenfunctions according to the degrees in their leading monomials and the type of representation of the symmetric group $S_N$ to which these monomials belong. In Section \ref{Section_sums} we relate partial sums of eigenvalues to values of the characters of the symmetric group and explain how this can be used for explicit computations of some of the eigenvalues. In Section \ref{Section_N3} we compute a three variable example highlighting that in some cases the eigenvalues are much more complicated. Finally, Section \ref{Section_appendix} is an appendix, where we recall various useful properties of the Dunkl operators.

\medskip

\noindent {\bf Acknowledgements.} We would like to thank Evgeni Dimitrov and Grigori Olshanski for very helpful discussions. We thank the referees for reading the manuscript and providing valuable suggestions. V.G.\ was partially supported by NSF grant DMS - 2246449.

\section{Heckman-Polychronakos operators}

\label{Section_HP_operators}

Throughout the paper we fix $N=1,2,\dots$ and use the ring $\mathbb C_N=\mathbb C[x_1,x_2\dots,x_N]$ of polynomials in $N$ variables.  We let $\theta$ be an auxiliary parameter which will either play the
role of a formal variable or a positive number throughout the paper.
\begin{definition} \label{Definition_Dunkl}
 For $1\le i\le N$ the \emph{Dunkl operator} $\D_i$ is a linear operator in $\mathbb C_N$ acting by
 \begin{equation}
 \D_i= \frac{\partial}{\partial x_i}+\theta \sum_{j\ne i} \frac{1-(i,j)}{x_i-x_j},
 \end{equation}
 where $(i,j)$ stands for the operator permuting the variables $x_i$ and $x_j$.
\end{definition}
\begin{definition} \label{Definition_HP_operator}
 For $m\ge 1$ the \emph{Heckman-Polychronakos operator} $\P_m$ is a linear operator in $\mathbb C_N$ acting by
 \begin{equation}
 \P_m= \sum_{i=1}^N (x_i \D_i)^m,
 \end{equation}
 where $x_i$ stands for the operator of multiplication by $x_i$.
\end{definition}

The operators $\P_m$ were first introduced and discussed in \cite{heckman1991elementary,polychronakos1992exchange}. The interest in them is based on three properties, whose proofs we recall in the Appendix:
\begin{itemize}
\item $\P_m$ commute with each other as operators in $\mathbb C_N$:  $\P_m \P_k=\P_k\P_m$.
\item When restricted onto the subring $\Lambda_N\subset \mathbb C_N$ of symmetric polynomials, the operator $\P_2$ is the (conjugated) Calogero-Moser-Sutherland Hamiltonian.
\item Symmetric Jack polynomials $J_\lambda(x_1,\dots,x_N;\, \theta)$, $\lambda_1\ge\lambda_2\ge \dots\ge \lambda_N\ge 0$, are eigenfunctions of all $\P_m$; see \cite{stanley1989some}, \cite[Chaprer VI]{macdonald1998symmetric} for general introduction to Jack polynomials and note that it uses the parameter $\alpha=1/\theta$.
\end{itemize}

In contrast to another famous family of operators satisfying these three properties --- power sums of the (Jack versions of) Cherednik operators \cite{cherednik1991unification,cherednik1994integration} --- the individual terms $(x_i \D_i)^m$ \emph{do not} commute.\footnote{For a fruitful study of commutators of $\D_i$ and $x_j$ operators see \cite{feigin2015dunkl}.} On the other hand, the advantage of Heckman-Polychronakos operators is that their definition is simpler than the Cherednik ones and they are equivariant with respect to the symmetric group action.

The aim of this paper is to study the spectrum of the operators $\P_m$: we would like to understand the eigenvalues corresponding to Jack polynomials $J_\lambda$, as well as those corresponding to other \emph{non-symmetric} eigenfunctions.

The $m=1$ operator is simpler than others:
$$
 \sum_{i=1}^N x_i \D_i = \sum_{i=1}^N x_i \frac{\partial}{\partial x_i}+\theta \sum_{1\le i<j\le N} \frac{x_i-x_j}{x_i-x_j} [1-(i,j)]=
  \sum_{i=1}^N x_i \frac{\partial}{\partial x_i} + \frac{ \theta N(N-1)}{2} - \theta \sum_{1\le i <j\le N} (i,j).
$$
The appearance of $\sum_{i<j} (i,j)$ operator hints on the relevance of the representation theory of symmetric group $S_N$ for the computations of eigenvalues and we will see that this is indeed the case.

\section{Symmetric eigenfunctions}
\label{Section_sym_eigenfunction}

In this section we compute the eigenvalues corresponding to the (symmetric) Jack polynomials. 

A partition of $n$ is a sequence of non-negative integers $\lambda=(\lambda_1\ge \lambda_2\ge \dots)$, such that $\sum_{i} \lambda_i=n$. One can write $\lambda\vdash n$ or $|\lambda|=n$ in this situation. The length of $\lambda$, denoted $\ell(\lambda)$ is the number of non-zero parts in $\lambda$. In particular, if $\ell(\lambda)\le N$, then $\lambda_{N+1}=0$ and we can represent $\lambda$ as $N$-tuple $(\lambda_1\ge\lambda_2\ge \dots\ge\lambda_N\ge 0)$. Sometimes exponents are used to indicate multiplicity, e.g.\ $(3,1^2)=(3,1,1)$.

We also recall the notation $h_m(x_1,\dots,x_N)$ for complete homogeneous symmetric polynomials and introduce their relatives, see \cite[Example 19, Section I.2]{macdonald1998symmetric} for similar (but slightly different) polynomials:
\begin{equation}
\label{eq_x3}
 h^{(r)}_m(x_1,\dots,x_N)=\sum_{r-\text{element subsets }\{i_1,\dots,i_r\}\subset\{1,2,\dots,N\}}h_m(x_{i_1},\dots,x_{i_r}).
\end{equation}
For $m<0$ we set $h^{(r)}_m=0$.
Clearly,
$$h^{(1)}_m(x_1,\dots x_N)=x_1^m+x_2^m+\dots+x_N^m,$$
$$h^{(2)}_{m-1}(x_1,\dots,x_N)=\sum_{i<j} \frac{x_i^{m}-x_j^m}{x_i-x_j},$$
and
$$
 h^{(N)}_m(x_1,\dots,x_N)=h_m(x_1,\dots,x_N)=\sum_{j_1+j_2+\dots+j_N=m} x_1^{j_1} x_2^{j_2}\cdots x_N^{j_N}.
$$

\begin{theorem} \label{Theorem_symmetric_ef}
 Denote $\ell_i=\lambda_i+\theta(N-i)$, $1\le i \le N$. For each $\lambda_1\ge \dots\ge \lambda_N\ge 0$ we have
 \begin{equation}
  \P_m J_\lambda(x_1,\dots,x_N;\, \theta)= \mathrm{eig}_m(\lambda) J_\lambda(x_1,\dots_N;\, \theta),
 \end{equation}
 where
 \begin{equation}
 \label{eq_Jack_ev}
  \mathrm{eig}_m(\lambda)=h^{(1)}_m\bigl(\ell_1,\dots,\ell_N\bigr)-\theta h^{(2)}_{m-1}\bigl(\ell_1,\dots,\ell_N\bigr)+\dots+(-\theta)^{N-1}  h^{(N)}_{m+1-N}\bigl(\ell_1,\dots,\ell_N\bigr).
 \end{equation}
\end{theorem}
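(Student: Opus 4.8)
The plan is to reduce the eigenvalue computation to a finite-dimensional linear-algebra problem living on the monomials that rearrange $x^\lambda$, and then to evaluate a resolvent by a path count that telescopes into the stated product. First I would set up a reduction ``modulo lower monomials''. For a composition $\mu\in\mathbb Z_{\ge0}^N$ write $\mu^+$ for its decreasing rearrangement and order the monomials of $\mathbb C_N$ by $x^\mu\preccurlyeq x^\nu$ iff $\mu^+\le\nu^+$ in dominance. A direct computation of $\D_i$ on a single monomial shows that each summand $\theta\,\frac{x_i(1-(i,j))}{x_i-x_j}\,x^\mu$ is a signed sum of monomials $x^\nu$ agreeing with $\mu$ off the coordinates $i,j$, with $\nu_i+\nu_j=\mu_i+\mu_j$ and $(\nu_i,\nu_j)$ lying between $(\mu_i,\mu_j)$ and the balanced pair; hence $\nu^+\le\mu^+$, and the only monomials with $\nu^+=\mu^+$ that appear are $x^\mu$ itself (coefficient $+1$, when $\mu_i>\mu_j$) and $x^{(i,j)\mu}$ (coefficient $-1$, when $\mu_i<\mu_j$). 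So each $x_i\D_i$ preserves the span of $\{x^\mu\preccurlyeq x^\lambda\}$ and, modulo the span of the strictly smaller monomials, acts on $V_\lambda:=\mathrm{span}\{x^\mu:\mu^+=\lambda\}$ by
\begin{equation*}
 \bar d_i\,x^\mu=\bigl(\mu_i+\theta\,\#\{j\ne i:\mu_j<\mu_i\}\bigr)x^\mu-\theta\!\!\sum_{j:\,\mu_j>\mu_i}\!\!x^{(i,j)\mu}.
\end{equation*}
Since $J_\lambda=c_\lambda m_\lambda+\sum_{\mu<\lambda}(\cdots)m_\mu$ with $c_\lambda\ne0$ (a defining property of Jack polynomials, \cite[Ch.~VI]{macdonald1998symmetric}; here $m_\lambda$ is the monomial symmetric polynomial and ``$<$'' is dominance) and $\P_mJ_\lambda=\mathrm{eig}_m(\lambda)J_\lambda$, comparing the coefficient of the monomial $x^\lambda$ on both sides gives $\mathrm{eig}_m(\lambda)=[x^\lambda]\,\P_m m_\lambda=\sum_{i=1}^N[x^\lambda]\,\bar d_i^{\,m}m_\lambda$.

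Next I would compute the generating function $\sum_{m\ge0}\mathrm{eig}_m(\lambda)z^m=\sum_{i=1}^N[x^\lambda](1-z\bar d_i)^{-1}m_\lambda$. Splitting $\bar d_i=D_i-\theta S_i$ with $D_i$ diagonal and $S_i$ the ``raising'' part (which strictly increases the $i$-th coordinate of the index), expanding $(1-z\bar d_i)^{-1}$ as a sum over paths alternating diagonal waits (each resummed to $\tfrac1{1-za}$, $a$ the current diagonal entry) and $S_i$-jumps (weight $-z\theta$ each), taking the coefficient of $x^\lambda$ and reversing the paths, I expect to find that the contributing reverse paths from $x^\lambda$ are parametrized by a strictly decreasing chain of part-values $\lambda_i>w_1>\dots>w_t$ of $\lambda$ together with a choice of slot holding each $w_s$. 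The two facts that make this work are that the slots touched along such a path are pairwise distinct and that the multiset of coordinates stays equal to the parts of $\lambda$ throughout, so the diagonal entry at a stage with value $v$ in slot $i$ is always $v+\theta\#\{k:\lambda_k<v\}$. Resumming over chains (a geometric product over the distinct part-values $<\lambda_i$) and telescoping within each block of equal parts — over the indices $k$ with $\lambda_k=v$ the numbers $\ell_k$ form an arithmetic progression of step $\theta$, so $\prod_{k:\lambda_k=v}\tfrac{1-z(\ell_k+\theta)}{1-z\ell_k}=\tfrac{1-z(\ell_{k^*(v)}+\theta\,m(v))}{1-z\ell_{k^*(v)}}$ with $k^*(v)$ the last such index and $m(v)$ its multiplicity — should give a closed form for $[x^\lambda](1-z\bar d_i)^{-1}m_\lambda$ depending on $i$ only through $\lambda_i$.

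Finally I would sum over $i$, grouping slots by value: with $v^{(1)}>\dots>v^{(g)}$ the distinct parts, $m_b=m(v^{(b)})$, $A_b=\ell_{k^*(v^{(b)})}$ and $U_b=\prod_{b'\ge b}\tfrac{1-z(A_{b'}+\theta m_{b'})}{1-zA_{b'}}$ (so $U_{g+1}=1$ and $U_1=\prod_{k=1}^N\tfrac{1-z(\ell_k+\theta)}{1-z\ell_k}$), each of the $m_b$ slots of value $v^{(b)}$ contributes $\tfrac1{1-zA_b}U_{b+1}$, and the identity $U_{b+1}-U_b=\tfrac{z\theta m_b}{1-zA_b}U_{b+1}$ telescopes the sum:
\begin{equation*}
 \sum_{m\ge0}\mathrm{eig}_m(\lambda)z^m=\sum_{b=1}^{g}\frac{m_b}{1-zA_b}\,U_{b+1}=\sum_{b=1}^{g}\frac{U_{b+1}-U_b}{z\theta}=\frac1{\theta z}\Bigl(1-\prod_{k=1}^{N}\frac{1-z(\ell_k+\theta)}{1-z\ell_k}\Bigr).
\end{equation*}
Expanding $\prod_k\tfrac{1-z(\ell_k+\theta)}{1-z\ell_k}=\sum_{r\ge0}(-z\theta)^r\sum_{|S|=r}\prod_{k\in S}(1-z\ell_k)^{-1}$ and reading off the coefficient of $z^m$ then reproduces \eqref{eq_Jack_ev}, since $h^{(r)}_{m+1-r}(\ell_1,\dots,\ell_N)=\sum_{|S|=r}h_{m+1-r}(\ell_S)$.

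The hard part will be the middle step — pinning down which reverse paths contribute and, above all, correctly tracking the diagonal entries when $\lambda$ has repeated parts, where the tempting guess that they equal the $\ell_i$ themselves is false. What is true, and what makes everything telescope, is the arithmetic-progression structure within each block of equal parts, which produces the block factor $\tfrac{1-z(\ell_{k^*(v)}+\theta\,m(v))}{1-z\ell_{k^*(v)}}$ and ultimately collapses the whole product. The remaining ingredients — the monomial bookkeeping of the first step, and the geometric/telescoping resummations — I expect to be routine once this combinatorial picture is set up.
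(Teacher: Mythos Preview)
Your argument is correct and arrives at the same generating-function identity as the paper's Proposition~\ref{Proposition_sym_gen_funct}, but the route is genuinely different. The paper does not compute $[x^\lambda]\P_m m_\lambda$ directly; instead it proves the much more general Theorem~\ref{Theorem_Traces_general} (a trace formula $\sum_i\mathrm{eig}_m(\lambda,\tau,i)=N\,\mathrm{Trace}_{V_\lambda}[\pi_\tau T_1^m]$ for every isotype $\tau$), specializes to $\tau=(N)$, and then uses Lemma~\ref{Lemma_symmetric_ev_match} to match the resulting $p\times p$ matrix-power expression \eqref{eq_x13} with the $N\times N$ one \eqref{eq_x12} via the same product \eqref{eq_x8}. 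Your $\bar d_i$ is exactly the paper's $T_i$, and your path enumeration is the same combinatorics that underlies the paper's identity \eqref{eq_x11}, but you organize it as a single matrix-element/resolvent computation rather than a trace with the symmetrizing projector $\pi_{(N)}$.

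What each approach buys: the paper's detour through traces is what lets the same machinery handle all $\tau$ at once (Theorems~\ref{Theorem_Traces_distinct} and~\ref{Theorem_Traces_general}); your direct matrix-element argument is shorter and self-contained for the symmetric case, and it reaches the product form \eqref{eq_x8} in one pass without first writing down a matrix power and then reconciling two versions of it. Your handling of repeated parts---noting that the diagonal entry at value $v$ is $v+\theta\,\#\{k:\lambda_k<v\}=\ell_{k^*(v)}$ rather than an arbitrary $\ell_i$, and then telescoping the block factor $\prod_{\lambda_k=v}\frac{1-z(\ell_k+\theta)}{1-z\ell_k}$---is exactly the content of the ``telescoping cancellations'' the paper invokes at the end of Lemma~\ref{Lemma_symmetric_ev_match}, made explicit.
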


Theorem \ref{Theorem_symmetric_ef} is a combination of Theorem \ref{Theorem_Traces_general} and Lemma \ref{Lemma_symmetric_ev_match} below.
An equivalent compact expression for the eigenvalue is:
\begin{equation}
\label{eq_x7}
 \mathrm{eig}_m(\lambda)=\begin{pmatrix}
1 & 1 & 1 & \ldots & 1%
\end{pmatrix}%
\begin{pmatrix}
\ell_{1} & -\theta & -\theta & \ldots & -\theta\\
0 & \ell_{2} & -\theta & \ldots & -\theta\\
0 & 0 & \ell_{3} & \ldots & -\theta\\
\vdots & \vdots & \vdots & \ddots & \vdots\\
0 & 0 & 0 & \ldots &  \ell_{N}%
\end{pmatrix}
^{m}%
\begin{pmatrix}
1\\
1\\
1\\
\vdots\\
1
\end{pmatrix}.
\end{equation}
The eigenvalues $\mathrm{eig}_m(\lambda)$ also have a simple generating function:
\begin{proposition} \label{Proposition_sym_gen_funct} Denoting $\ell_i=\lambda_i+\theta(N-i)$, for each $\lambda_1\ge \dots\ge \lambda_N\ge 0$ we have:
\begin{equation}
\label{eq_x8}
 1-\theta z \sum_{m=0}^{\infty}  \mathrm{eig}_m(\lambda) z^m =\prod_{i=1}^N\left(1-\frac{\theta z}{1-\ell_i z}\right)=\prod_{i=1}^{N}\frac{1-(\ell_i+\theta)z}{1-\ell_i z}.
\end{equation}
\end{proposition}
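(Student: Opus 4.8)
The plan is to start from the compact matrix expression \eqref{eq_x7} for $\mathrm{eig}_m(\lambda)$ and convert the generating function $\sum_{m\ge 0}\mathrm{eig}_m(\lambda)z^m$ into the resolvent of the upper-triangular matrix appearing there. Writing $M$ for the $N\times N$ matrix with $\ell_i$ on the diagonal and $-\theta$ strictly above it, \eqref{eq_x7} says $\mathrm{eig}_m(\lambda)=\mathbf{1}^{\T} M^m \mathbf{1}$, where $\mathbf{1}=(1,\dots,1)^{\T}$. Hence $\sum_{m\ge 0}\mathrm{eig}_m(\lambda)z^m=\mathbf{1}^{\T}(I-zM)^{-1}\mathbf{1}$, valid as a formal power series in $z$. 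So the proposition reduces to the purely linear-algebraic identity
\[
 1-\theta z\,\mathbf{1}^{\T}(I-zM)^{-1}\mathbf{1}=\prod_{i=1}^{N}\frac{1-(\ell_i+\theta)z}{1-\ell_i z}.
\]

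The key step is to compute $\mathbf{1}^{\T}(I-zM)^{-1}\mathbf{1}$ explicitly. Since $I-zM$ is upper triangular with diagonal entries $1-\ell_i z$ and off-diagonal entries $\theta z$, I would solve the linear system $(I-zM)\mathbf{y}=\mathbf{1}$ by back-substitution from $i=N$ down to $i=1$: the last equation gives $y_N=1/(1-\ell_N z)$, and in general $(1-\ell_i z)y_i+\theta z\sum_{j>i}y_j=1$. Setting $S_i=\sum_{j\ge i}y_j$, this recursion becomes $(1-\ell_i z)y_i=1-\theta z(S_i-y_i)$, i.e. $(1-(\ell_i+\theta)z)y_i=1-\theta z\,S_i$, and combined with $y_i=S_i-S_{i+1}$ one gets a first-order recursion for $S_i$ of the shape $S_i=\dfrac{1-(\ell_i+\theta)z}{1-\ell_i z}\,S_{i+1}+(\text{something})$ — after simplification the homogeneous multiplier is exactly $\frac{1-\ell_i z-\theta z}{1-\ell_i z}$, which is the telescoping factor we want. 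Iterating from $S_{N+1}=0$, the quantity $\mathbf{1}^{\T}\mathbf{y}=S_1$ telescopes into the stated product; more precisely one shows $1-\theta z\,S_1=\prod_{i=1}^N\frac{1-(\ell_i+\theta)z}{1-\ell_i z}$ by induction on $N$ (or on the index in the back-substitution), the inductive step being a one-line manipulation of the recursion. The equality $\prod_i\bigl(1-\frac{\theta z}{1-\ell_i z}\bigr)=\prod_i\frac{1-(\ell_i+\theta)z}{1-\ell_i z}$ is immediate.

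The main obstacle — really the only one — is bookkeeping in the back-substitution: one must track the partial sums $S_i$ correctly and verify that the cross terms combine so that the coefficient of $S_{i+1}$ collapses to the clean factor $\tfrac{1-(\ell_i+\theta)z}{1-\ell_i z}$ rather than something messier. An alternative, perhaps cleaner, route that avoids solving the system: decompose $M=D-\theta(R-I_{\le})$ appropriately, or more simply observe that $M=U$ with $U$ the matrix whose $(i,j)$ entry for $i<j$ is $-\theta$; one can also verify the identity by checking it is rational in $z$ with the right poles and zeros and matching residues at $z=1/\ell_i$, using that $(I-zM)^{-1}$ has a simple pole there with rank-one residue. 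I would present the back-substitution argument as the primary proof since it is elementary and self-contained, and at most remark on the residue-matching alternative. As a sanity check one can confirm the $z^0$ and $z^1$ coefficients: the constant term gives $\mathrm{eig}_0(\lambda)=N$, and the $z^1$ coefficient reproduces $\mathrm{eig}_1(\lambda)=\sum_i\ell_i-\theta\binom{N}{2}=\sum_i\lambda_i$, matching the $m=1$ formula in Section \ref{Section_HP_operators}.
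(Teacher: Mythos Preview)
Your argument is correct but takes a genuinely different route from the paper's proof. The paper never touches the resolvent $(I-zM)^{-1}$; instead it plugs the explicit expansion \eqref{eq_Jack_ev},
\[
 \mathrm{eig}_m(\lambda)=\sum_{r=1}^{N}(-\theta)^{r-1}h^{(r)}_{m+1-r}(\ell_1,\dots,\ell_N),
\]
into the generating series, uses the standard identity $\sum_{k\ge 0}h_k(\ell_{i_1},\dots,\ell_{i_r})z^k=\prod_{j}\frac{1}{1-\ell_{i_j}z}$ for each $r$-element subset, and then recognises the resulting sum over all subsets $\mathcal A\subset\{1,\dots,N\}$ of $(-\theta z)^{|\mathcal A|}\prod_{a\in\mathcal A}\frac{1}{1-\ell_a z}$ as the expansion of $\prod_{i=1}^N\bigl(1-\tfrac{\theta z}{1-\ell_i z}\bigr)$. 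That is the entire proof: three lines, purely combinatorial.

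Your back-substitution works cleanly too. For the record, the recursion you set up is exactly
\[
 (1-\ell_i z)\,S_i \;=\; 1+\bigl(1-(\ell_i+\theta)z\bigr)S_{i+1},
\]
and multiplying through by $-\theta z$ and adding $1$ gives the multiplicative recursion $1-\theta z\,S_i=\dfrac{1-(\ell_i+\theta)z}{1-\ell_i z}\,(1-\theta z\,S_{i+1})$, which telescopes from $S_{N+1}=0$ to the product; so the ``one-line manipulation'' you allude to is genuinely one line. What each approach buys: the paper's subset-expansion argument is shorter and exposes directly why the same product appears again in the proof of Lemma~\ref{Lemma_symmetric_ev_match}; your resolvent computation is more self-contained (it does not invoke the $h_m$ generating function) and makes transparent the role of the triangular matrix \eqref{eq_x7}. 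The vague aside about decomposing $M$ or matching residues can be dropped---the back-substitution already does the job.
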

\begin{proof}
 For each $1\le r\le N$, using the generating function for the complete homogeneous polynomials $h_m$ (see \cite[Chapter 1, Section 2]{macdonald1998symmetric}) we have
\begin{multline*}
 \sum_{m=0}^{\infty} h^{(r)}_{m+1-r}(\ell_1,\dots,\ell_N) z^m
 = z^{r-1} \sum_{r-\text{element subsets }\{i_1,\dots,i_r\}\subset\{1,2,\dots,N\}}\sum_{k=0}^{\infty} h_k(\ell_{i_1},\dots,\ell_{i_r}) z^k
 \\= z^{r-1} \sum_{r-\text{element subsets }\{i_1,\dots,i_r\}\subset\{1,2,\dots,N\}} \prod_{j=1}^r \frac{1}{1-\ell_{i_j}z}.
\end{multline*}
Hence, plugging into \eqref{eq_Jack_ev} we get
$$
  1-\theta z \sum_{m=0}^{\infty}  \mathrm{eig}_m(\lambda) z^m=
  \sum_{\begin{smallmatrix} \mathcal A\subset \{1,\dots,N\} \end{smallmatrix}}  (-\theta z)^{|\mathcal A|}
   \prod_{a\in \mathcal A} \frac{1}{1-\ell_{a}z}=\prod_{i=1}^N\left(1-\frac{\theta z}{1-\ell_i z}\right).\qedhere
$$
\end{proof}

\begin{remark}
 The fact that symmetric Jack polynomials are eigenfunctions of $\P_m$ is not new: there is no doubt that the authors of \cite{heckman1991elementary,polychronakos1992exchange,lapointe1996exact} were aware of it. However, the formulas for the eigenvalues are more elusive and we could not locate \eqref{eq_Jack_ev}, \eqref{eq_x7}, or \eqref{eq_x8} in the literature.

 G.~Olshanski pointed out to us that in the special case\footnote{At $\theta=1$ Jack polynomials turn into Schur polynomials, which are characters of the irreducible representations of the unitary group $U(N)$.} $\theta=1$, \eqref{eq_x7} matches the formula for the eigenvalue of the classical Casimir operator $C_m$, as given in \cite[(*) and Theorem 2 in Section 60]{zhelobenko1973compact}.

  Formula \eqref{eq_x8} is similar to the expression for eigenvalues of Nazarov-Sklyanin operators in the space of polynomials in infinitely many variables, see \cite[Section 6]{nazarov2013integrable}, and also \cite{mickler2023spectral} for further references and discussions related to these operators.
\end{remark}

\section{Classification of eigenfunctions}

\label{Section_classification}

Outside Jack polynomials, other eigenfunctions of $\P_m$ are much less understood. In this section, we propose a classification for them based on the degrees of their leading monomials and the isotype (equivalence class) of the representation of the symmetric group $S_N$ to which these monomials belong.

We need the  \emph{dominance order} on partitions:
\begin{definition}
 Let $\lambda=(\lambda_1\ge\dots\lambda_N\ge 0)$ and $\tilde \lambda=(\tilde \lambda_1\ge \dots\ge \tilde \lambda_N\ge 0)$ be two distinct partitions of length at most $N$. We write $\lambda\succ \tilde \lambda$, if  $\sum_{i=1}^N\lambda_i = \sum_{i=1}^N \tilde \lambda_i$ and for
  all $1\le k \le N$, we have $\sum_{i=1}^k \lambda_i \ge \sum_{i=1}^k \tilde \lambda_i$.
\end{definition}

Let $\gamma=(\gamma_1,\dots,\gamma_N)$ be a degree sequence of a monomial in $x_1,\dots,x_N$ and write $x^{\gamma}$ for $x_1^{\gamma_1} x_2^{\gamma_2}\cdots x_N^{\gamma_N}$. We let $\gamma^+$ denote the partition $\lambda_1\ge\dots\lambda_N\ge 0$ obtained by rearranging $\gamma_i$ in nonincreasing order.

\begin{definition} \label{Definition_V_lambda}
 For $\lambda=(\lambda_1\ge\dots\lambda_N\ge 0)$, we let $V_\lambda$ denote the linear space spanned by all monomials $x^{\gamma}$ with $\gamma^+=\lambda$.
\end{definition}

Note that $V_\lambda$ has a structure of a representation of symmetric group $S_N$ permuting the variables $x_i$. We write $(i,j)$ for the transpositions in the symmetric group and $(i,j,k\dots)$ for one-cycle permutations. When we write $\sigma_2 \sigma_1 f$ for a polynomial $f(x_1,\dots,x_N)$ and permutations $\sigma_1,\sigma_2\in S_N$, we mean that we first permute the variables according to $\sigma_1$ and then permute according to $\sigma_2$. For instance:
$$
 (1,2)\, x_1^2 x_2 = x_1 x_2^2,\qquad (1,3) (1,2)\, x_1^2 x_2 = (1,2,3)\, x_1^2 x_2= x_2^2 x_3.
$$

Let $V_{\lambda;\tau}$ denote the isotypical component of the irreducible representation $\tau$ in $V_\lambda$, so that
\begin{equation}
\label{eq_V_decomposition}
 V_\lambda=\bigoplus_{\tau} V_{\lambda;\tau}.
\end{equation}
We recall that irreducible representations of symmetric group $S_N$ are parameterized by partitions $\tau\vdash N$, see \cite[Section I.7]{macdonald1998symmetric}, \cite{okounkov1996new}, or \cite{james2006representation}  for the reviews of the representation theory of $S_N$. If we denote $\mathrm{Mult}(\lambda)$ the partition of $N$ representing multiplicities of coordinates in $\lambda$ (for each $a\ge 0$ we count how many times $a$ appears in $\lambda$, and then reorder decreasingly the resulting numbers), then the summation in \eqref{eq_V_decomposition} goes over $\tau$ such that $\mathrm{Mult}(\lambda)\preceq \tau$, see, e.g.\ \cite[Remark on page 115]{macdonald1998symmetric}.

Outside several special cases, for generic $\lambda$ and $\tau$ the numbers $\dim V_{\lambda;\tau}$ do not admit a closed formula, although they are of interest in algebraic combinatorics (being equivalent to \emph{Kostka numbers}, cf.\ \cite[Section I.6]{macdonald1998symmetric}). It is straightforward to show that the dependence of $\dim V_{\lambda;\tau}$ on $\lambda$ is only through $\mathrm{Mult}(\lambda)$.

\begin{theorem}\label{Theorem_classification_of_ef}
 For each triplet $(\lambda,\tau,i)$ with $\lambda=(\lambda_1\ge \dots\ge \lambda_N\ge 0)$, $\tau\vdash N$, and $1\le i \le \dim V_{\lambda;\tau}$, the operators $\P_m$, $m\ge 1$, have a joint eigenfunction $F_{\lambda,\tau,i}$, such that:
 \begin{enumerate}
  \item $F_{\lambda,\tau,i}$ is a homogeneous polynomial of degree $|\lambda|$.
  \item All monomials $x^{\gamma}$ with non-zero coefficients in $F_{\lambda,\tau,i}$ are such that $\gamma^+\preceq \lambda$.
  \item The part of $F_{\lambda,\tau,i}$ spanned by monomials in $V_\lambda$ belongs to $V_{\lambda;\tau}$.
 \end{enumerate}
 The polynomials $F_{\lambda,\tau,i}$ span $\mathbb C_N$ and there are no other polynomial eigenfunctions.
\end{theorem}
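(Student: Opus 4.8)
The plan is to exhibit three structural features of the operators $\P_m$ --- they are graded, they are ``upper triangular'' for the dominance order, and they commute with each other and with $S_N$ --- and then to add one analytic input, simultaneous diagonalizability, that turns the triangular picture into a spanning family of honest eigenfunctions. First I would note that each $x_i\D_i$, hence every $\P_m$, preserves the grading by total degree, so one may work inside a fixed component $\mathbb C_N^{(n)}=\bigoplus_{|\lambda|=n,\,\ell(\lambda)\le N}V_\lambda$. A direct computation of $x_i\D_i$ on a monomial $x^\gamma$ gives
\[
 x_i\D_i\,x^\gamma=\Bigl(\gamma_i+\theta\,\#\{j\ne i:\gamma_j<\gamma_i\}\Bigr)x^\gamma-\theta\!\!\sum_{j:\,\gamma_j>\gamma_i}\!\!(i,j)\,x^\gamma+(\text{lower}),
\]
where $(\text{lower})$ is a combination of monomials $x^\delta$ with $\delta^+\prec\gamma^+$; these appear because $\tfrac{1-(i,j)}{x_i-x_j}$ replaces a pair of exponents $(a,b)$ with $a>b$ by pairs $(s,a+b-s)$, $b<s<a$, which strictly lowers every partial sum of the sorted exponent vector. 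Consequently $\P_m V_\lambda\subseteq V_\lambda\oplus\bigoplus_{\mu\prec\lambda}V_\mu$, so each $\widehat W_\lambda:=\bigoplus_{\mu\preceq\lambda}V_\mu$ is $\P_m$-invariant and $\P_m$ induces an operator $D_{m,\lambda}$ on $\widehat W_\lambda/\widehat W_{\prec\lambda}\cong V_\lambda$. Since $\sigma\D_i\sigma^{-1}=\D_{\sigma(i)}$ and $\sigma x_i\sigma^{-1}=x_{\sigma(i)}$ for $\sigma\in S_N$, the $\P_m$ commute with $S_N$; hence the $D_{m,\lambda}$ are $S_N$-equivariant --- in particular they preserve each $V_{\lambda;\tau}$ --- and, because the $\P_m$ commute, so do the $D_{m,\lambda}$ for fixed $\lambda$.

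The crucial input is that $\{\P_m\}_{m\ge 1}$ is \emph{simultaneously diagonalizable} on each $\mathbb C_N^{(n)}$. For $\theta$ a positive real the clean way to get this is the contravariant (Dunkl) bilinear form $\langle\cdot,\cdot\rangle$ on $\mathbb C_N$, characterized by $\langle x_i f,g\rangle=\langle f,\D_i g\rangle$ and positive definite on every homogeneous component when $\theta\ge 0$: then each $x_i\D_i$ is self-adjoint, hence so is $\P_m=\sum_i(x_i\D_i)^m$, and a commuting family of self-adjoint operators is simultaneously diagonalizable. For $\theta$ a formal parameter this transfers by specialization, since a matrix over $\mathbb C(\theta)$ diagonalizable at a Zariski-dense set of specializations is diagonalizable over $\overline{\mathbb C(\theta)}$. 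It then follows, by restriction and passage to quotients, that each commuting family $\{D_{m,\lambda}\}_m$ is simultaneously diagonalizable on $V_\lambda$, and hence on each $V_{\lambda;\tau}$.

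Finally I would assemble the eigenfunctions and count. Choosing a linear refinement of the dominance order produces a $\P_m$-invariant flag of $\mathbb C_N^{(n)}$ whose successive quotients are the $V_\lambda$, on which $\P_m$ acts as $D_{m,\lambda}$. Fixing a joint eigenvalue $\xi=(\xi_m)_m$ with joint eigenspace $E_\xi$, the map $E_\xi\cap\widehat W_\lambda\to\widehat W_\lambda/\widehat W_{\prec\lambda}\cong V_\lambda$ has kernel $E_\xi\cap\widehat W_{\prec\lambda}$ and image inside the $\xi$-eigenspace of $D_{\cdot,\lambda}$; summing the resulting inequalities over $\lambda$ and over $\xi$, and comparing with $\dim\mathbb C_N^{(n)}=\sum_\lambda\dim V_\lambda$ (using the simultaneous diagonalizability of the $D_{\cdot,\lambda}$), forces every such map to be an isomorphism, and the same argument run inside each invariant $\widehat W_\lambda$ upgrades this to surjectivity of $(E_\xi\cap\widehat W_\lambda)/(E_\xi\cap\widehat W_{\prec\lambda})\to\{\xi\text{-eigenvectors of }D_{\cdot,\lambda}\}$. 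Therefore, picking for each $(\lambda,\tau)$ a basis $v_1,\dots,v_{\dim V_{\lambda;\tau}}$ of $V_{\lambda;\tau}$ consisting of joint $D_{\cdot,\lambda}$-eigenvectors, we get a joint eigenfunction $F_{\lambda,\tau,i}\in\widehat W_\lambda$ with $\pi_\lambda(F_{\lambda,\tau,i})=v_i$; properties (1)--(3) are then immediate (homogeneous of degree $|\lambda|$; supported on monomials $x^\gamma$ with $\gamma^+\preceq\lambda$; $V_\lambda$-part equal to $v_i\in V_{\lambda;\tau}$). Linear independence follows by applying $\pi_\lambda$ with $\lambda$ dominance-maximal among the labels in a hypothetical relation, and since $\sum_{\lambda,\tau}\dim V_{\lambda;\tau}=\sum_\lambda\dim V_\lambda=\dim\mathbb C_N$ the $F_{\lambda,\tau,i}$ form a basis; as $\mathbb C_N$ then has a basis of joint eigenfunctions, there are no further polynomial eigenfunctions.

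The main obstacle is the simultaneous diagonalizability: the triangular structure and the dimension count are essentially bookkeeping, but without diagonalizability one is left with a filtration rather than a spanning set of genuine eigenfunctions. The positive-definite form disposes of $\theta>0$ immediately, and the only point demanding real care beyond that is the passage to formal or exceptional values of $\theta$; the sole genuinely computational ingredient is verifying the ``$(\text{lower})$'' claim in the monomial identity above.
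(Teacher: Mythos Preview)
Your proof is correct and takes essentially the same route as the paper: triangularity of $\P_m$ with respect to dominance (the paper's Lemma~\ref{Lemma_triangularity}, whose leading term $T_1^m+\dots+T_N^m$ is precisely your induced operator $D_{m,\lambda}$ on $V_\lambda$), self-adjointness of $x_i\D_i$ to obtain simultaneous diagonalizability on each graded piece, and an induction/dimension count to lift joint eigenvectors from $V_{\lambda;\tau}$ to $\widehat W_\lambda$. The only cosmetic difference is that the paper establishes self-adjointness via the torus integral pairing of Lemma~\ref{Lemma_self_adjoint} rather than the Dunkl contravariant form $\langle f,g\rangle=[f(\D)g](0)$ you invoke---a choice the paper itself flags as an equivalent alternative in the Remark following that lemma; your explicit remark on passing to formal~$\theta$ by specialization is a point the paper leaves tacit.
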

\begin{remark}
 We do not propose any explicit form for the dependence of $F_{\lambda,\tau,i}$ on $i$. One reason is that the combination of operators $\P_m$ and the structure of $S_N$-representation is not sufficient to distinguish them. Indeed, for any coefficients $\alpha$ and $\beta$ and any $g\in S_N$, the polynomial $\alpha \cdot F_{\lambda,\tau,i} +\beta \cdot g \circ F_{\lambda,\tau,i}$ is an eigenfunction of $\P_m$ satisfying the same three properties and with the same eigenvalue.
\end{remark}
\begin{examples}
 For each $\lambda$, one can choose $\tau$ to be a one-part partition, $\tau=(N)$, corresponding to the trivial representation of $S_N$. Then $V_{\lambda;\tau}$ is one dimensional space spanned by the sum of all monomials in $V_\lambda$. The corresponding eigenfunction is the symmetric Jack polynomial.

 For each $\lambda$ with distinct parts, one can choose $\tau$ to be a unique $N$-parts partition $\tau=1^{N}$, corresponding to the sign representation of $S_N$. Then $V_{\lambda;\tau}$ is one dimensional space spanned by the signed sum of all monomials in $V_\lambda$, i.e.\ $\sum_{\sigma\in S_N} (-1)^{\sigma}  \circ x_{\sigma(1)}^{\lambda_1} x_{\sigma(2)}^{\lambda_2}\cdots x_{\sigma(N)}^{\lambda_N}$. The corresponding eigenfunction is a skew-symmetric polynomial, which can be expressed as alternating sum of all non-symmetric Jack polynomials labeled by $N!$ compositions obtained by permuting the coordinates of $\lambda$ or, alternatively, as the symmetric Jack polynomial with shifted $\theta$ and multiplied by the $\prod_{i<j}(x_i-x_j)$, see \cite[(2.40)]{baker1997calogero}.

 For $\lambda=(a,a,\dots,a)$, the space $V_\lambda$ is one-dimensional and only $\tau=(N)$ is possible.

 For $\lambda$ with distinct parts, the space $V_\lambda$ is isomorphic to the regular representation of $S_N$ of dimension $N!$, because there is a bijection between elements of $S_N$ and monomials $x^\gamma$ with $\gamma^+=\lambda$. For each partition $\tau\vdash N$, $\dim V_{\lambda;\tau}$ is the square of the dimension\footnote{The dimension can be computed, e.g., by the hook length formula of \cite{frame1954hook}.} of the irreducible representation $\tau$, and this is the number of eigenfunctions $F_{\lambda,\tau,i}$.
\end{examples}

In the rest of this section we prove Theorem \ref{Theorem_classification_of_ef}. Two ingredients of the proof are triangularity of Lemma \ref{Lemma_triangularity} and self-adjointness of Lemma \ref{Lemma_self_adjoint}.

Let us introduce a linear operator $T_i$, which acts in $\mathbb C_N$ and leaves each $V_\lambda$ invariant:
\begin{equation}
\label{eq_x4}
 T_i x^{\gamma}= \left(\gamma_i + \theta \#\{j\mid \gamma_j<\gamma_i\}\right)x^{\gamma}- \theta \sum_{j\mid \gamma_j>\gamma_i} (i,j) x^{\gamma}, \qquad 1\le i \le N.
\end{equation}

\begin{lemma} \label{Lemma_triangularity}
 For any partition $\lambda$ and any degree sequence $\gamma$ with $\gamma^+=\lambda$, we have
 $$
 \P_m[x^{\gamma}]=(T_1^m+T_2^m+\dots+T_N^m)[x^{\gamma}]+(\text{linear combination of monomials } x^{\tilde \gamma}\text{ with }\tilde \gamma^+\prec\lambda).
 $$
\end{lemma}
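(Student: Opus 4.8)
The plan is to compute $\P_m[x^\gamma]$ directly by understanding how a single operator $x_i\D_i$ acts on a monomial and then iterating. First I would record the action of $x_i \D_i$ on $x^\gamma$: since $\D_i = \partial/\partial x_i + \theta\sum_{j\ne i}\frac{1-(i,j)}{x_i-x_j}$, multiplying by $x_i$ gives
\begin{equation*}
 x_i\D_i\, x^\gamma = \gamma_i x^\gamma + \theta\sum_{j\ne i}\frac{x_i}{x_i-x_j}\bigl(x^\gamma - (i,j)x^\gamma\bigr).
\end{equation*}
The key elementary computation is that for a monomial, $\frac{x_i}{x_i-x_j}(x^\gamma-(i,j)x^\gamma)$ is again a \emph{polynomial} (the numerator vanishes on $x_i=x_j$), and in fact it is a signed sum of monomials all of whose exponent vectors are obtained from $\gamma$ (or from $(i,j)\gamma$) by moving weight between coordinates $i$ and $j$ while keeping the total in those two slots fixed; one checks the three cases $\gamma_j<\gamma_i$, $\gamma_j=\gamma_i$, $\gamma_j>\gamma_i$ separately. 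The monomials so produced have sorted exponent vectors $\preceq \gamma^+$, with the unique ``top'' contributions (those whose sorted vector equals $\lambda=\gamma^+$) being exactly $x^\gamma$ itself and, when $\gamma_j>\gamma_i$, the term $-\theta (i,j)x^\gamma$. Collecting the top-degree part across all $j\ne i$ yields precisely the operator $T_i$ of \eqref{eq_x4}: the diagonal coefficient $\gamma_i+\theta\#\{j:\gamma_j<\gamma_i\}$ comes from $x^\gamma$ together with the $j$ for which the transposition term also contributes $x^\gamma$, and the off-diagonal $-\theta\sum_{j:\gamma_j>\gamma_i}(i,j)$ is the remaining top part. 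Thus $x_i\D_i[x^\gamma] = T_i[x^\gamma] + R_i[x^\gamma]$ where $R_i$ is a ``lower'' operator sending $x^\gamma$ into the span of monomials $x^{\tilde\gamma}$ with $\tilde\gamma^+\prec\gamma^+$.

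The second step is to iterate. I would prove by induction on $m$ that $(x_i\D_i)^m[x^\gamma] = T_i^m[x^\gamma] + (\text{lower terms})$, for which the essential point is that the set $\{x^{\tilde\gamma}: \tilde\gamma^+\preceq\lambda\}$ is preserved by every $x_j\D_j$, and moreover the ``strictly lower'' span $\{x^{\tilde\gamma}:\tilde\gamma^+\prec\lambda\}$ is \emph{also} preserved by every $x_j\D_j$ (by the same case analysis applied at a lower level — the operators $x_j\D_j$ never increase the sorted exponent vector in dominance order). Hence if $(x_i\D_i)^{m-1}[x^\gamma] = T_i^{m-1}[x^\gamma] + (\text{strictly lower})$, applying $x_i\D_i$ once more and splitting $x_i\D_i = T_i + R_i$ gives $T_i^m[x^\gamma] + R_i[T_i^{m-1}x^\gamma] + (x_i\D_i)(\text{strictly lower})$, and both correction terms lie in the strictly-lower span (note $T_i$ also preserves the filtration, since it is by definition block-triangular on the $V_\mu$'s — in fact it preserves each $V_\lambda$). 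Summing over $i$ from $1$ to $N$ then gives $\P_m[x^\gamma] = \sum_i T_i^m[x^\gamma] + (\text{strictly lower})$, which is the claim.

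The main obstacle, and the only place requiring genuine care, is the first step: verifying that $\frac{x_i}{x_i-x_j}(x^\gamma - (i,j)x^\gamma)$ expands into monomials whose sorted exponent vectors are $\preceq\lambda$, with the description of which ones achieve equality. Writing $\gamma_i=a$, $\gamma_j=b$ and factoring out the common part of the monomial in the other variables, this reduces to analyzing $\frac{x_i}{x_i-x_j}(x_i^a x_j^b - x_i^b x_j^a) = \mathrm{sgn}(a-b)\,x_i x_j^{\min(a,b)}(x_i^{|a-b|-1}+x_i^{|a-b|-2}x_j+\dots+x_j^{|a-b|-1})$ when $a\ne b$ and $0$ when $a=b$; one then checks that every exponent pair appearing, namely $(\min(a,b)+1+k,\ \max(a,b)-1-k)$ for $0\le k\le |a-b|-1$, has the property that $\{\min(a,b)+1+k,\max(a,b)-1-k\}$ is dominated by $\{a,b\}$, with equality only at the endpoints $k=0$ or $k=|a-b|-1$, and then tracking which of these endpoint terms survive with a top-degree sorted vector after adding back the other coordinates and comparing with $\lambda$. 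This is elementary but it is where the combinatorial content of the coefficient $\gamma_i+\theta\#\{j:\gamma_j<\gamma_i\}$ and the sum $-\theta\sum_{j:\gamma_j>\gamma_i}(i,j)$ is pinned down; everything afterwards is formal filtration bookkeeping.
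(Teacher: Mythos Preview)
Your argument is correct and matches the paper's approach exactly: both compute $x_i\D_i[x^\gamma]$ by expanding the difference quotient $\tfrac{x_i}{x_i-x_j}\bigl(x^\gamma-(i,j)x^\gamma\bigr)$ case by case, identify the dominance-top part as $T_i[x^\gamma]$, and then iterate using that $x_i\D_i$ never raises the sorted exponent vector. There is a harmless typo in your displayed formula for $\tfrac{x_i}{x_i-x_j}(x_i^a x_j^b - x_i^b x_j^a)$ (a factor $x_i^{\min(a,b)}$ is missing), but your subsequent list of exponent pairs $(\min(a,b)+1+k,\max(a,b)-1-k)$ is correct and the argument is unaffected.
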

\begin{proof}
 Note that
 \begin{multline}
   x_i \frac{1-(i,j)}{x_i-x_j} \bigl[x_1^{\gamma_1},\dots, x_N^{\gamma_N}\bigr]= \prod_{a\ne i,j} x_a^{\gamma_a} \cdot  x_i \frac{x_i^{\gamma_i} x_j^{\gamma_j}-x_i^{\gamma_j} x_j^{\gamma_i}}{x_i-x_j}\\
   = \prod_{a\ne i,j} x_a^{\gamma_a}\cdot \begin{cases} x_i^{\gamma_i} x_j^{\gamma_j}+ x_i^{\gamma_i-1} x_j^{\gamma_j+1}+\dots+ x_i^{\gamma_j+1} x_j^{\gamma_i-1},& \gamma_j<\gamma_i,\\
   -x_i^{\gamma_j} x_j^{\gamma_i}- x_i^{\gamma_j-1} x_j^{\gamma_i+1}-\dots- x_i^{\gamma_i+1} x_j^{\gamma_j-1},& \gamma_j>\gamma_i,\\
   0,& \gamma_j=\gamma_i. \end{cases}
 \end{multline}
Hence,
$$
 x_i \D_i [x^{\gamma}]= T_i x^{\gamma}+ (\text{linear combination of monomials } x^{\tilde \gamma}\text{ with }\tilde \gamma^+\prec\gamma^+).
$$
Iterating the last identity for $\P_m=\sum_{i=1}^N (x_i \D_i)^m$, we arrive at the statement of the lemma.
\end{proof}

\begin{lemma} \label{Lemma_self_adjoint}
 Consider a scalar product on $\mathbb C_N$, where $\overline{g}$ stands for complex conjugation and $\theta\ge 0$:
 \begin{equation} \label{eq_scalar_product}
  \langle f,g \rangle =\int_{0}^{2\pi}\cdots\int_{0}^{2\pi} f\bigl(e^{\ii \phi_1},\dots,e^{\ii\phi_N}\bigr) \overline{g\bigl(e^{\ii \phi_1},\dots,e^{\ii\phi_N}\bigr)}
  \prod_{1\le i<j \le N} \bigl|e^{\ii \phi_i} - e^{\ii \phi_j}\bigr|^{2\theta} \, \dd \phi_1\dots \dd\phi_N.
 \end{equation}
 Then the operators $x_i \D_i$, $1\le i \le N$, are self-adjoint with respect to it: for any $f,g,\in\mathbb C_N$
 $$
  \langle x_i\D_i f,g\rangle=  \langle f,  x_i\D_i g\rangle.
 $$
\end{lemma}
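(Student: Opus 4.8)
The plan is to split $x_i\D_i = x_i\partial_{x_i} + R_i$, where $R_i := \theta\sum_{j\ne i}\frac{x_i}{x_i-x_j}\bigl(1-(i,j)\bigr)$, to treat the differential and difference parts separately, and to verify that the two resulting ``defect'' terms cancel. I would begin by recording that $R_i[x^{\gamma}]$ is again a polynomial (the bracketed difference is divisible by $x_i-x_j$), so every integral below is well defined: for $\theta>0$ the weight $w:=\prod_{k<l}|e^{\ii\phi_k}-e^{\ii\phi_l}|^{2\theta}$ and its first $\phi$-partials are integrable on the torus, while $f$, $g$, $x_i\partial_{x_i}f$, $x_i\partial_{x_i}g$, $R_if$, $R_ig$ are all polynomials.

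For the differential part I would pass to angular coordinates. Since $\partial_{\phi_i}=\ii\,x_i\partial_{x_i}$ on the torus, one has $x_i\partial_{x_i}=-\ii\,\partial_{\phi_i}$, so $\langle x_i\partial_{x_i}f,g\rangle=\int(-\ii\,\partial_{\phi_i}f)\,\overline g\,w\,\dd\phi$; integrating by parts in $\phi_i$ (boundary terms vanish by $2\pi$-periodicity) moves the derivative onto $\overline g$ and onto $w$. The $\overline g$-term reassembles into $\langle f,x_i\partial_{x_i}g\rangle$, and the $w$-term calls for the logarithmic derivative of the weight, which a short computation using $\overline{x_k}=x_k^{-1}$ evaluates to $\partial_{\phi_i}\log w=\ii\theta\sum_{j\ne i}\frac{x_i+x_j}{x_i-x_j}$. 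This yields
\[
 \langle x_i\partial_{x_i}f,g\rangle-\langle f,x_i\partial_{x_i}g\rangle=-\theta\int f\,\overline g\,\Bigl(\textstyle\sum_{j\ne i}\frac{x_i+x_j}{x_i-x_j}\Bigr)\,w\,\dd\phi .
\]

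For the difference part I would expand $\langle R_if,g\rangle$ termwise and, in each summand, transform the piece containing $(i,j)$ by the change of variables $\phi_i\leftrightarrow\phi_j$, which fixes both $w$ and $\dd\phi$ and removes the transposition at the cost of turning $(i,j)f$ into $f$, $\overline g$ into $\overline{(i,j)g}$, and the coefficient $\tfrac{x_i}{x_i-x_j}$ into $-\tfrac{x_j}{x_i-x_j}$. Doing likewise to $\langle f,R_ig\rangle=\overline{\langle R_ig,f\rangle}$ and simplifying the conjugated rational factors via $\overline{x_i/(x_i-x_j)}=-x_j/(x_i-x_j)$ and $\overline{x_j/(x_i-x_j)}=-x_i/(x_i-x_j)$, the two integrals containing $\overline{(i,j)g}$ cancel and what remains is precisely
\[
 \langle R_if,g\rangle-\langle f,R_ig\rangle=\theta\int f\,\overline g\,\Bigl(\textstyle\sum_{j\ne i}\frac{x_i+x_j}{x_i-x_j}\Bigr)\,w\,\dd\phi .
\]
Adding the two displayed identities makes the defects cancel, giving $\langle x_i\D_if,g\rangle=\langle f,x_i\D_ig\rangle$. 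I expect the main obstacle to be the bookkeeping in the difference part --- keeping track of which variables the transpositions act on after the swap $\phi_i\leftrightarrow\phi_j$ and conjugating the rational factors correctly; the evaluation of $\partial_{\phi_i}\log w$ is the one genuinely new ingredient, but it is routine.
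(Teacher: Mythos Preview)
Your proof is correct and uses the same two tools as the paper's proof: integration by parts in $\phi_i$ (producing the logarithmic derivative of the weight) and the change of variables $\phi_i\leftrightarrow\phi_j$ to handle the transposition term. The only cosmetic difference is the grouping: the paper pairs the rational ``identity'' piece $\theta\sum_{j\ne i}\frac{x_i}{x_i-x_j}$ with the derivative so that each of the two blocks is separately self-adjoint, whereas you keep the pure derivative and the full difference operator $R_i$ apart and show that their individual defects $\mp\theta\int f\overline g\sum_{j\ne i}\frac{x_i+x_j}{x_i-x_j}\,w\,\dd\phi$ cancel.
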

\begin{proof} Versions of this statement can be found in \cite{dunkl1991integral} and \cite{heckman1991elementary}. Changing the variables $x_i=\exp(-\ii \phi_i)$, the operator $x_i \D_i$ is transformed into
$$
 \ii \frac{\partial}{\partial \phi_i}+\theta \sum_{j\ne i} \frac{1-(i,j)}{1-e^{\ii (\phi_j-\phi_i)}},
$$
and we need to check that it is self-adjoint with respect to the scalar product
 \begin{equation} \label{eq_scalar_product}
  \langle f,g \rangle_{\phi} =\int_{0}^{2\pi}\cdots\int_{0}^{2\pi} f\bigl(\phi_1,\dots,\phi_N\bigr) \overline{g\bigl( \phi_1,\dots,\phi_N\bigr)}
  \prod_{1\le i<j \le N} \bigl|e^{\ii \phi_i} - e^{\ii \phi_j}\bigr|^{2\theta} \, \dd \phi_1\dots \dd\phi_N.
 \end{equation}
We first check that the operator  $\frac{(i,j)}{1-\exp(\ii (\phi_j-\phi_i))}$ is self-adjoint, which is the identity
\begin{multline*}
 \int_{0}^{2\pi}\cdots\int_{0}^{2\pi} \frac{\bigl[(i,j) f\bigl(\phi_1,\dots,\phi_N\bigr)\bigr] \overline{g\bigl( \phi_1,\dots,\phi_N\bigr)}}{1-e^{\ii (\phi_j-\phi_i)}}
  \prod_{1\le i<j \le N} \bigl|e^{\ii \phi_i} - e^{\ii \phi_j}\bigr|^{2\theta} \, \dd \phi_1\dots \dd\phi_N
  \\ \stackrel{?}{=}  \int_{0}^{2\pi}\cdots\int_{0}^{2\pi} \frac{f\bigl(\phi_1,\dots,\phi_N\bigr) \bigl[(i,j)\overline{g\bigl( \phi_1,\dots,\phi_N \bigr)}\bigr]}{1-e^{-\ii (\phi_j-\phi_i)}}
  \prod_{1\le i<j \le N} \bigl|e^{\ii \phi_i} - e^{\ii \phi_j}\bigr|^{2\theta} \, \dd \phi_1\dots \dd\phi_N,
\end{multline*}
that is tautologically true by renaming $\phi_i \leftrightarrow \phi_j$ in the integral. It remains to check that $\ii \frac{\partial}{\partial \phi_i}+\theta \sum_{j\ne i} \frac{1}{1-\exp(\ii (\phi_j-\phi_i))}$ is self-adjoint. For that we note that if $F(\phi)  $ is periodic and differentiable on the torus $0\leq\phi<2\pi$, then $\int_{0}^{2\pi}\frac{\partial}{\partial\phi}F\left(
\phi\right)  \mathrm{d}\phi=F\left(  2\pi\right)  -F\left(  0\right)  =0$. Hence, recording the fact that the $\frac{\partial}{\partial \phi_i}$ derivative of the integrand in the right-hand side of \eqref{eq_scalar_product} integrates to zero we obtain:
\begin{multline*}
 0=\int_{0}^{2\pi}\cdots\int_{0}^{2\pi} \left[  \frac{\partial}{\partial \phi_i} f \overline{g} +  f \frac{\partial}{\partial \phi_i} \overline{g} + \theta f\overline{g} \sum_{j\ne i} \frac{\ii e^{\ii \phi_i}}{e^{\ii \phi_i}-e^{\ii\phi_j}} + \theta f\overline{g} \sum_{j\ne i}  \frac{-\ii e^{-\ii \phi_i}}{e^{-\ii \phi_i}-e^{-\ii\phi_j}} \right]
   \\ \times \prod_{1\le i<j \le N} \bigl(e^{\ii \phi_i} - e^{\ii \phi_j}\bigr)^{\theta} \bigl(e^{-\ii \phi_i} - e^{-\ii \phi_j}\bigr)^{\theta} \, \dd \phi_1\dots \dd\phi_N.
\end{multline*}
 Multiplying by $\ii$ we get the desired self-adjointness statement.
\end{proof}
\begin{remark}
 Following \cite[Section 3]{dunkl1991integral} and \cite[Section 3]{dunkl2008reflection}, there is another scalar product,
 which also makes $x_i \D_i$ self-adjoint. This other scalar product is defined through
 $\langle f,g\rangle=f(\D_1,\dots,\D_N) g(x_1,\dots,x_N)\bigr|_{x_1=\dots=x_N=0}$ and it makes $x_i$ the adjoint operator to $\D_i$.
\end{remark}

\begin{proof}[Proof of Theorem \ref{Theorem_classification_of_ef}] The operators $\P_m$ are self-adjoint by Lemma \ref{Lemma_self_adjoint}, commutative by Lemma \ref{Lemma_commute} in the appendix, and essentially finite-dimensional because $\P_m$ preserves the space of degree $k$ polynomials for each $k$. Hence, they have a joint eigenbasis and we only need to show that the eigenfunctions satisfy the claimed properties.  Consider the space $\bigoplus\limits_{\mu\mid \mu\preceq \lambda} V_{\mu}$. By Lemma \ref{Lemma_triangularity}, this is an invariant space for $\P_m$. Because it is finite-dimensional and $\P_m$ is self-adjoint by Lemma \ref{Lemma_self_adjoint}, this space has a complete system of eigenfunctions of $\P_m$. Inductively counting eigenfunctions, which belong to such spaces with smaller $\lambda$, we conclude that in $\bigoplus\limits_{\mu\mid \mu\preceq \lambda} V_{\mu}$ there are precisely $\dim V_\lambda$ eigenfunctions, which have at least one non-zero coefficient among monomials from $V_\lambda$: these are various $F_{\lambda,\tau,i}$ with fixed $\lambda$ and varying $\tau$ and $i$. By triangularity of Lemma \ref{Lemma_triangularity}, for each such polynomial, its part spanned by the monomials from $V_\lambda$ is necessarily an eigenfunction of $(T_1^m+T_2^m+\dots+T_N^m)$. The latter operator has a symmetric expression in terms of $x_1,\dots,x_N$, and therefore it commutes with the action of $S_N$. Hence, by the Schur's lemma, each $V_{\lambda;\tau}$ is an invariant subspace for $(T_1^m+T_2^m+\dots+T_N^m)$. Therefore, there are precisely $\dim V_{\lambda;\tau}$ eigenfunctions of $(T_1^m+T_2^m+\dots+T_N^m)$ inside $V_{\lambda;\tau}$. By triangularity, each of them has a unique extension to an eigenfunction of $\P_m$ and these are $F_{\lambda,\tau,i}$, $1\le i \le \dim V_{\lambda;\tau}$.
\end{proof}

\section{Sums of eigenvalues}

\label{Section_sums}

In general, the eigenvalue of the operator $\P_m$ on the polynomial $F_{\lambda,\tau,i}$ can be expressed as a root of a polynomial equation (of degree at most $\dim(\tau)$) with coefficients being polynomials in $\lambda_i$ and $\theta$; therefore, this eigenvalue can be quite complicated, see, e.g.\ \eqref{eq_irrational_ev} in Section \ref{Section_N3}. We found that the sums of all eigenvalues corresponding to the same $\lambda$ and $\tau$, are much more manageable.

\subsection{Distinct degrees of variables}

For an irreducible representation of $S_N$ labeled by $\tau\vdash N$, we let $\dim \tau$ be its dimension  and let $\chi^{\tau}(g)$ be its character. In particular, $\dim(\tau)=\chi^{\tau}(\mathrm{Id})$. We recall the notation $h_m^{(r)}$ from \eqref{eq_x3}. We also recall that $(1,2,\dots,k)$ is the permutation having a single $k$-cycle and $N-k$ fixed points $\{k+1,\dots,N\}$.

\begin{theorem} \label{Theorem_Traces_distinct}
 Let $\mathrm{eig}_m(\lambda,\tau,i)$ denote the eigenvalue of $\P_m$ on the eigenfunction $F_{\lambda,\tau,i}$  of Theorem \ref{Theorem_classification_of_ef}. Suppose that $\lambda$ has all distinct parts. Then, setting $\ell_i=\lambda_i+\theta(N-i)$, we have
 \begin{equation}
 \label{eq_distinct_trace}
  \sum_{i=1}^{\dim V_{\lambda,\tau}} \mathrm{eig}_m(\lambda,\tau,i) = \dim \tau \sum_{k=1}^{\min(m+1,N)}  (-\theta)^{k-1}  h^{(k)}_{m+1-k}\bigl(\ell_1,\dots,\ell_N\bigr) \chi^\tau\bigl( (1,2,\dots,k) ).
 \end{equation}
\end{theorem}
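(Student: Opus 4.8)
The plan is to convert the sum of eigenvalues into a trace over an $S_N$-isotypic component, then into a character sum, and finally to evaluate that sum by a combinatorial enumeration over $S_N$.

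\smallskip
\noindent\emph{Step 1: reduction to a trace.} Abbreviate $\mathcal T_m:=T_1^m+\dots+T_N^m$. By Lemma~\ref{Lemma_triangularity}, on the invariant space $\bigcup_{\mu\preceq\lambda}V_\mu$ the operator $\P_m$ is block-triangular with respect to the grading by $\mu$, with diagonal block $\mathcal T_m$ on $V_\lambda$. By the proof of Theorem~\ref{Theorem_classification_of_ef}, the projections onto $V_\lambda$ of the eigenfunctions $F_{\lambda,\tau,i}$, $1\le i\le\dim V_{\lambda;\tau}$, form a basis of $V_{\lambda;\tau}$ consisting of eigenvectors of $\mathcal T_m$ with eigenvalues $\mathrm{eig}_m(\lambda,\tau,i)$; hence $\sum_i\mathrm{eig}_m(\lambda,\tau,i)=\mathrm{Tr}_{V_{\lambda;\tau}}(\mathcal T_m)$. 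A one-line check from \eqref{eq_x4} gives $\sigma T_i\sigma^{-1}=T_{\sigma(i)}$ for $\sigma\in S_N$, so $\mathcal T_m$ is $S_N$-equivariant; writing $P_\tau=\tfrac{\dim\tau}{N!}\sum_{g\in S_N}\chi^\tau(g)\,g$ for the projector onto $V_{\lambda;\tau}$ inside $V_\lambda$, we get $\mathrm{Tr}_{V_{\lambda;\tau}}(\mathcal T_m)=\mathrm{Tr}_{V_\lambda}(\mathcal T_m P_\tau)=\tfrac{\dim\tau}{N!}\sum_{g}\chi^\tau(g)\,\mathrm{Tr}_{V_\lambda}(\mathcal T_m\, g)$.

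\smallskip
\noindent\emph{Step 2: a path expansion and a vanishing statement.} Since $\lambda$ has distinct parts, $V_\lambda$ has the monomial basis $\{x^\gamma:\gamma^+=\lambda\}$, naturally indexed by $S_N$. Iterating \eqref{eq_x4}, $T_i^m x^\gamma$ is a sum over length-$m$ ``paths'', each step being either a \emph{stay} (contributing the scalar $\ell_{\,\cdot\,}$ attached to the current exponent at position~$i$) or an off-diagonal swap $(i,j)$ with current exponent $\gamma_j>\gamma_i$. The plan is to show that along any path the exponent at position~$i$ is non-decreasing, that the swap targets which actually occur are pairwise distinct, and that a path whose swap targets, in order, are $j_1,\dots,j_s$ produces the net variable permutation equal to the single cycle $(i,j_1,\dots,j_s)$ (the identity if $s=0$). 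Matching $x^\gamma$ with itself in $T_i^m\,g\,x^\gamma$ then forces $g^{-1}$ (equivalently $g$) to be the identity or a single cycle through~$i$, so $\mathrm{Tr}_{V_\lambda}(\mathcal T_m g)=0$ unless $g$ has cycle type $(k,1^{N-k})$.

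\smallskip
\noindent\emph{Step 3: evaluation for cycle type $(k,1^{N-k})$ and assembly.} Fix a $k$-cycle $g$ with support $S$, $|S|=k$, and $i\in S$; there is a unique admissible ordered sequence of $k-1$ swaps realizing the cycle $g^{-1}$. A path of length $m$ realizing it interleaves these $k-1$ swaps with $m+1-k$ stays, and admissibility forces the starting monomial to carry the $k$ exponents supported on $S$ in the order dictated by the cycle. Summing the stay-weights over all interleavings gives $h_{m+1-k}$ of the $k$ numbers $\ell_b$ indexed by that $k$-subset of $\{1,\dots,N\}$; summing over which monomial contributes ($\binom Nk$ choices for the $k$-subset of indices, each with $(N-k)!$ ways to fill the complement) and invoking \eqref{eq_x3} yields $\mathrm{Tr}_{V_\lambda}(T_i^m g)=(-\theta)^{k-1}(N-k)!\,h^{(k)}_{m+1-k}(\ell_1,\dots,\ell_N)$, independent of $i\in S$; summing over $i\in S$ multiplies by $k$. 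The case $g=e$ is the same enumeration with no swaps and all $N$ indices $i$ contributing, giving $N!\,h^{(1)}_m(\ell)$. Plugging into Step~1, using that the number of permutations of cycle type $(k,1^{N-k})$ equals $\tfrac{N!}{k\,(N-k)!}$ and that $\chi^\tau$ is constant $=\chi^\tau((1,2,\dots,k))$ on that class, all combinatorial prefactors cancel and one arrives at \eqref{eq_distinct_trace}; the sum truncates at $k=\min(m+1,N)$ since $h^{(k)}_{m+1-k}=0$ for $k>m+1$.

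\smallskip
\noindent The real content, and the main obstacle, is the combinatorial analysis underlying Steps~2--3: proving the monotonicity of the exponent at position~$i$ along a path, the distinctness and forced ordering of the occurring swap targets, and then the bookkeeping of the sign $(-\theta)^{k-1}$ together with the complete-homogeneous weight $h_{m+1-k}$ — all while keeping the conventions for the $S_N$-action on monomials consistent. Once this combinatorial lemma is in place, Step~1 and the final assembly reduce to a routine character-sum manipulation.
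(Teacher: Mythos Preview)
Your proposal is correct and follows essentially the same route as the paper. Both arguments reduce the eigenvalue sum to $\mathrm{Tr}_{V_\lambda}\bigl(\pi_\tau(T_1^m+\dots+T_N^m)\bigr)$ via the isotypic projector, expand $T_i^m x^\gamma$ as a sum over paths whose swap steps produce a strictly increasing sequence of exponents at position $i$ (hence a single cycle through $i$), and then observe that in the regular representation only identity contributions survive, forcing $g$ to be a single $k$-cycle. The only cosmetic differences are that the paper immediately uses the $S_N$-symmetry to reduce to $N\cdot\mathrm{Tr}_{V_\lambda}(\pi_\tau T_1^m)$ and packages the final sum over stay-weights as the matrix power \eqref{eq_x11}, whereas you keep all $T_i$ and count directly over the conjugacy class of $k$-cycles; the identifications with $h^{(k)}_{m+1-k}$ and the cancellation of the combinatorial prefactors $k\cdot(N-k)!\cdot\tfrac{N!}{k(N-k)!}$ are the same computation in both.
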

When $\tau=(N)$, $\dim V_{\lambda,\tau}=1$ and the expression \eqref{eq_distinct_trace} turns into \eqref{eq_Jack_ev}; this eigenvalue corresponds to the symmetric eigenfunction. When $\tau=(1^N)$, also $\dim V_{\lambda,\tau}=1$ and the expression \eqref{eq_distinct_trace} turns into a formula similar to \eqref{eq_Jack_ev} but with $(-\theta)^{k-1}$ replaced with $\theta^{k-1}$; this eigenvalue corresponds to a skew-symmetric eigenfunction.

In general, the numbers $\chi^\tau\bigl( (1,2,\dots, k) )$ admit a somewhat explicit formula in terms of $\tau=(\tau_1\ge \dots\ge\tau_N\ge 0)$, see, e.g., \cite[Example I.7.7]{macdonald1998symmetric} for the following expression and \cite[Section 3]{ivanov2002kerov} for some others
\begin{equation}
\label{eq_one_cycle_characters}
 \chi^\tau\bigl( (1,2,\dots, k) )=\dim \tau \frac{(N-k)!}{N!} \sum_{i=1}^N \frac{(\tau_i+N-i)!}{(\tau_i+N-i-k)!} \prod_{j\ne i} \frac{\tau_i-i-\tau_j+j-k}{\tau_i-i-\tau_j+j}.
\end{equation}

If parts of $\lambda$ are allowed to coincide, \eqref{eq_distinct_trace} is replaced by a significantly more complicated formula, which we present in Theorem \ref{Theorem_Traces_general} below.

For the proof of Theorem \ref{Theorem_Traces_distinct} we use $T_i$ of \eqref{eq_x4}. We also need an additional object from the representation theory of $S_N$. For each irreducible representation $\tau$ of $S_N$, denote
\begin{equation}
\label{eq_projector}
 \pi_\tau= \frac{\dim \tau}{N!} \sum_{g\in S_N} \chi^\tau(g) g.
\end{equation}
\cite[Theorem 8]{serre1977linear} says that in each representation of $S_N$, $\pi_\tau$ acts as an orthogonal projector (with respect to an $S_N$-invariant scalar product) onto the isotypical component of $\tau$.

\begin{lemma} \label{Lemma_projector}
 The left-hand side of \eqref{eq_distinct_trace} can be evaluated as
 \begin{equation}
 \label{eq_eig_as_Trace}
   \sum_{i=1}^{\dim V_{\lambda,\tau}} \mathrm{eig}_m(\lambda,\tau,i) =\mathrm{Trace}_{V_\lambda} \left[\pi_\tau (T_1^m+\dots+T_N^m)\right]= N\,  \mathrm{Trace}_{V_\lambda} \left[\pi_\tau T_1^m\right].
 \end{equation}
\end{lemma}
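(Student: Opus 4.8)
The plan is to combine the triangularity of Lemma \ref{Lemma_triangularity} with the projector property of $\pi_\tau$. First I would observe that the operators $\P_m$ and $T_1^m+\dots+T_N^m$ act on the finite-dimensional space $W_\lambda := \bigoplus_{\mu \preceq \lambda} V_\mu$, and that $T_1^m+\dots+T_N^m$ is block-triangular with respect to the direct sum decomposition into the $V_\mu$'s, with diagonal block on $V_\lambda$ equal to the restriction of $T_1^m+\dots+T_N^m$ to $V_\lambda$ (this is exactly Lemma \ref{Lemma_triangularity}). Since the $F_{\lambda,\tau,i}$ together with the eigenfunctions coming from strictly smaller partitions form an eigenbasis of $W_\lambda$, the eigenvalue $\mathrm{eig}_m(\lambda,\tau,i)$ coincides with the eigenvalue of $T_1^m+\dots+T_N^m$ on the $V_\lambda$-part of $F_{\lambda,\tau,i}$, which by property (3) of Theorem \ref{Theorem_classification_of_ef} lies in $V_{\lambda;\tau}$. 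Thus the list of numbers $\{\mathrm{eig}_m(\lambda,\tau,i)\}_{i=1}^{\dim V_{\lambda;\tau}}$ is precisely the spectrum of $T_1^m+\dots+T_N^m$ restricted to the invariant subspace $V_{\lambda;\tau} \subset V_\lambda$ (invariance was established in the proof of Theorem \ref{Theorem_classification_of_ef} via Schur's lemma, since $T_1^m+\dots+T_N^m$ is $S_N$-equivariant). Hence their sum equals $\mathrm{Trace}_{V_{\lambda;\tau}}(T_1^m+\dots+T_N^m)$.

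Next I would rewrite this trace over the subspace as a trace over all of $V_\lambda$ composed with the projector. Since $\pi_\tau$ acts on $V_\lambda$ as the projector onto $V_{\lambda;\tau}$ (by \cite[Theorem 8]{serre1977linear}), and since $\pi_\tau$ commutes with $T_1^m+\dots+T_N^m$ (both being $S_N$-equivariant, $\pi_\tau$ being a linear combination of group elements), we get
$$
 \mathrm{Trace}_{V_{\lambda;\tau}}\bigl(T_1^m+\dots+T_N^m\bigr) = \mathrm{Trace}_{V_\lambda}\bigl[\pi_\tau (T_1^m+\dots+T_N^m)\bigr],
$$
which is the first equality in \eqref{eq_eig_as_Trace}.

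For the second equality, I would exploit the $S_N$-symmetry of the operator $T_1^m+\dots+T_N^m$ together with the fact that $\pi_\tau$ commutes with all of $S_N$. Conjugating by a permutation $g$ that sends index $1$ to index $i$, one has $g\, T_1^m\, g^{-1} = T_i^m$ as operators on $\mathbb C_N$ (this follows directly from the defining formula \eqref{eq_x4}, which is manifestly $S_N$-equivariant), and since $g$ commutes with $\pi_\tau$ and trace is conjugation-invariant, $\mathrm{Trace}_{V_\lambda}[\pi_\tau T_i^m] = \mathrm{Trace}_{V_\lambda}[\pi_\tau T_1^m]$ for every $i$. Summing over $i=1,\dots,N$ yields $\mathrm{Trace}_{V_\lambda}[\pi_\tau(T_1^m+\dots+T_N^m)] = N\,\mathrm{Trace}_{V_\lambda}[\pi_\tau T_1^m]$, completing the proof.

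The main obstacle is really a bookkeeping one: making sure that the eigenvalue $\mathrm{eig}_m(\lambda,\tau,i)$ of $\P_m$ genuinely coincides with the eigenvalue of $T_1^m+\dots+T_N^m$ on the top-degree-type part, i.e. that no contributions leak in from the strictly-dominated spaces $V_\mu$, $\mu \prec \lambda$. This is handled cleanly by the block-triangular structure and the inductive counting of eigenfunctions already carried out in the proof of Theorem \ref{Theorem_classification_of_ef}, so in fact there is no serious difficulty; one only needs to state the triangular reduction carefully. A secondary subtle point is to verify that $g\, T_1^m\, g^{-1} = T_i^m$ exactly (not just up to lower-order terms), but this is immediate from \eqref{eq_x4}.
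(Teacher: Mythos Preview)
Your proof is correct and follows essentially the same route as the paper's: identify $\mathrm{eig}_m(\lambda,\tau,i)$ with the eigenvalues of $T_1^m+\dots+T_N^m$ on $V_{\lambda;\tau}$ via the triangularity established in the proof of Theorem~\ref{Theorem_classification_of_ef}, then pass to a trace over $V_\lambda$ using the projector $\pi_\tau$, and finally use the $S_N$--equivariance to reduce to $N\,\mathrm{Trace}_{V_\lambda}[\pi_\tau T_1^m]$. The only cosmetic difference is that where you invoke commutativity of $\pi_\tau$ with $T_1^m+\dots+T_N^m$ to write $\mathrm{Trace}_{V_{\lambda;\tau}}(T_1^m+\dots+T_N^m)=\mathrm{Trace}_{V_\lambda}[\pi_\tau(T_1^m+\dots+T_N^m)]$, the paper instead writes $\mathrm{Trace}_{V_\lambda}[\pi_\tau(T_1^m+\dots+T_N^m)\pi_\tau]$ and then uses cyclicity of trace together with $\pi_\tau^2=\pi_\tau$; these are equivalent one-line reductions.
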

\begin{proof}
 The second equality in \eqref{eq_eig_as_Trace} follows from the invariance of both the Trace and $\pi_\tau$ under conjugations with $g\in S_N$, and we only prove the first one.

 Recall from the proof of Theorem \ref{Theorem_classification_of_ef} that $\mathrm{eig}_m(\lambda,\tau,i)$ are eigenvalues of $(T_1^m+\dots+T_n^m)$, corresponding to eigenfunctions in $V_{\lambda;\tau}$. Hence, $\mathrm{eig}_m(\lambda,\tau,i)$ are eigenvalues of the restriction of $(T_1^m+\dots+T_n^m)$ onto $V_{\lambda;\tau}$, which is the same as eigenvalues of $\pi_\tau (T_1^m+\dots+T_n^m) \pi_\tau$. The sum of the eigenvalues coincides with the trace of the operator, and we get $\mathrm{Trace}_{V_\lambda} \left[\pi_\tau (T_1^m+\dots+T_n^m)\pi_\tau\right]$. Since $\mathrm{Trace}[AB]=\mathrm{Trace}[BA]$, we can move one $\pi_\tau$ inside the trace from right to the left, and then using the projecting property $\pi_\tau \pi_\tau=\pi_\tau$, we arrive at \eqref{eq_eig_as_Trace}.
\end{proof}

\begin{proof}[Proof of Theorem \ref{Theorem_Traces_distinct}] We use Lemma \ref{Lemma_projector} and compute $\mathrm{Trace}_{V_\lambda} \left[\pi_\tau T_1^m\right]$ combining \eqref{eq_x4} with \eqref{eq_projector}. The computation is based on the observation that $V_\lambda$ is isomorphic to the regular representation of $S_N$, because for distinct permutations $\sigma\in S_N$, their actions on $x_1^{\lambda_1}\cdots x_N^{\lambda_N}$ give distinct monomials. Therefore, in the trace computation all products of $g$ and $(1,j)$ not equal to the identity permutation do not give any contribution.

Let us compute $(T_1)^m x^{\gamma}$ for $\gamma=(\gamma_1,\gamma_2,\dots,\gamma_N)$. 
We have
\begin{equation}
\label{eq_x5}
 (T_1)^m x^{\gamma}=\sum_{k=1}^{m+1}\sum_{j_1,\dots,j_{k-1}} p_{k;q}[\gamma_1,\gamma_{j_1},\dots,\gamma_{j_{k-1}}]\cdot  (-\theta)^{k-1}(1,j_{k-1})(1, j_{k-2})\cdots (1,j_{1}) x^{\gamma},
\end{equation}
where $j_1,\dots,j_{k-1}$ are distinct indices, such that each of them corresponds to distinct $\gamma_{j_{k-1}}>\gamma_{j_{k-2}}>\dots>\gamma_{j_1}>\gamma_1$. The transpositions $(1,j_a)$ correspond to the second term in \eqref{eq_x4} and the ordering appears because the operator $T_1$ can only increase the degree of $x_1$, but never decrease. On the other hand, the prefactor $p_{k;q}$ collects all the factors coming from the first term; it is a polynomial in various $\theta$-shifted $\gamma_i$ of degree $(m+1-k)$, which depends on the collection $\gamma_1, \gamma_{j_1},\dots,\gamma_{j_{k-1}}$ and on $\theta$. Note also that the product of $k-1$ transpositions becomes a $k$-cycle
$$
 (1,j_{k-1})(1, j_{k-2})\cdots (1,j_{1})=(1, j_{1}, j_{2},\dots, j_{k-1}).
$$
We further represent $g$ in \eqref{eq_projector} as
$$
 g=\tilde g\cdot  (1,j_1)  \cdots (1,j_{k-2})(1, j_{k-1}),
$$
and note that when we plug into $\mathrm{Trace}_{V_\lambda} \left[\pi_\tau T_1^m\right]$, and multiply the sums, only the terms with $\tilde g=\mathrm{Id}$ give non-zero contributions.
We also note that the trace is a central function and its values on all $k$-cycles are the same. Therefore,
\begin{multline}
\label{eq_x6}
 N \mathrm{Trace}_{V_\lambda} \left[\pi_\tau T_1^m\right]= \frac{\dim \tau}{(N-1)!}  \sum_{k=1}^{m+1} (-\theta)^{k-1} \chi^\tau\bigl( (1,2,\dots,k) \bigr)
 \sum_{\gamma}  \sum_{j_1,\dots,j_{k-1}} p_{k;m}[\gamma_1,\gamma_{j_1},\dots,\gamma_{j_{k-1}}].
\end{multline}
It remains to compute the last double sum and we claim that
\begin{multline}
\label{eq_x11}
  \frac{1}{(N-1)!}\sum_{\gamma}  \sum_{j_1,\dots,j_{k-1}} p_{k;m}[\gamma_1,\gamma_{j_1},\dots,\gamma_{j_{k-1}}]\\=
  \text{Coefficient of }(-\theta)^{k-1}\text{ in: }
 \begin{pmatrix}1&1&\dots&1\end{pmatrix} \cdot
   \begin{pmatrix}
                 \ell_1& -\theta &\dots & && -\theta \\
                 0 & \ell_2& -\theta &&& -\theta\\
                 0 & 0 & \ell_3 & -\theta && \vdots\\
                 \vdots &\vdots &\vdots&&& -\theta\\
                 0 &0 &&\dots &0 &\ell_N
  \end{pmatrix}^m
  \cdot \begin{pmatrix} 1\\ 1 \\ \vdots \\ 1 \end{pmatrix}
\end{multline}
Indeed, there are $N!$ terms in the sum over $\gamma$, however, only the rank of $\gamma_1$ among all coordinates of $\gamma$ matters for the computation
of the following $ \sum_{j_1,\dots,j_{k-1}}$. Hence, $\frac{1}{(N-1)!}\sum_{\gamma}$ can be interpreted as a sum with $N$ terms corresponding to $N$ possible
ranks of $\gamma_1$.

Next, we notice that the first term in \eqref{eq_x4} is always equal to $\ell_a$ and therefore matches the diagonal element of the matrix
in \eqref{eq_x11}. Similarly, the $-\theta(i,j)$ terms in \eqref{eq_x4} match the off-diagonal $(-\theta)$ elements in the matrix  in \eqref{eq_x11}.
Hence, we see that the combinatorics of rising the matrix to the $m$-th power in \eqref{eq_x11} and raising $T_i$ to the $m$-th power is exactly
the same. Hence, the identity \eqref{eq_x11}.

It remains to identify the right-hand side of \eqref{eq_x11} with  $h^{(k)}_{m+1-k}\bigl(\ell_1,\dots,\ell_N\bigr)$, which is immediate from the definition \eqref{eq_x3}: the $k$-element subset $\{i_1,\dots,i_k\}$ encodes those diagonal elements of the matrix in \eqref{eq_x11}, which enter into the computation of the $m$-th power.

Finally, the summation $\sum_{k=1}^{m+1}$ in \eqref{eq_x6} can be restricted to $\sum_{k=1}^{\min(m+1,N)}$ in \eqref{eq_distinct_trace}, because for $k>N$ finding $k$ distinct coordinates $\gamma_1,\gamma_{j_1},\dots,\gamma_{j_{k-1}}$ becomes impossible and the corresponding sum in \eqref{eq_x6} is empty.
\end{proof}

\subsection{General degrees}

We now proceed to the most general result, in which parts of $\lambda$ are allowed to have multiplicities.

Given an integer $1\le p\le N$, a multiplicity composition (i.e.\ a sequence of $p$ positive integers) $\mathbf n=(n_1, n_2, \dots, n_p>0)$ with $\sum_{i=1}^p n_i=N$, and a degree sequence $\mathbf d=(d_1> d_2> \dots> d_p\ge 0)$, we introduce a partition
\begin{equation}
\label{eq_general_lambda}
\lambda= \underbrace{d_1,\dots, d_{1}}_{n_1}, \underbrace{d_2,\dots, d_{2}}_{n_2}, \dots, \underbrace{d_p,\dots, d_{p}}_{n_p}.
\end{equation}

For any $k$-element subset $\mathcal A\subset \{1,\dots,p\}$ of the form $\{a_1<a_2<\dots<a_k\}$, we introduce averaged characters also called ``spherical functions'':
\begin{equation}
\label{eq_family_character}
 \chi^\tau[\mathcal A; \mathbf n]=\frac{1}{(n_1)!(n_2)!\cdots (n_p)!}\sum_{\tilde g\in S_{n_1}\times S_{n_2}\times  \dots \times S_{n_p}}\,\, \chi^\tau(\tilde g \cdot \mathfrak c),
\end{equation}
where $\mathfrak c$ is a $k$-cycle, joining together the $S_{n_{a_i}}$, $1\le i\le k$, subgroups, in the order of increasing $i$. More explicitly:
$$
 \mathfrak c= (n_1+\dots+n_{a_1}-1,\, n_1+\dots+n_{a_2}-1,\, \dots, n_1+\dots+n_{a_{k-1}}-1,\, n_1+\dots+n_{a_{k}}-1).
$$
\begin{remark}
Due to centrality of the characters, one can also choose other numbers representing $S_{n_{a_i}}$ subgroups when defining $\mathfrak c$ and arrive at exactly the same function $\chi^{\tau}[\mathcal A; \mathbf n]$. For instance, $n_1+\dots+n_{a_1}-1$ can be replaced by any $j$, such that $n_1+\dots+ n_{a_1-1}<j \le  n_1+\dots+n_{a_1}-1$.
\end{remark}

We also introduce the shifted degrees:
$$
 \tilde \ell_i= d_i + \theta (n_{i+1}+n_{i+2}+\dots+n_p),\qquad 1\le i \le p.
$$
Note that $\{\tilde \ell_i\}_{i=1}^p$ is a subset of $\{\ell_i=\lambda_{i}+\theta(N-i)\}_{i=1}^N$. Finally, we define complete homogeneous polynomials in subsets of these numbers:
\begin{equation}
\label{eq_general_h}
 h^{\mathcal A}_{m}=h_m(\tilde \ell_{a_1},\tilde \ell_{a_2},\dots,\tilde \ell_{a_k}), \qquad \mathcal A=\{a_1,a_2,\dots,a_k\}.
\end{equation}
Note that for $p=N$, the polynomial $h^{(k)}_m$, as in \eqref{eq_x3}, is the sum of $h^{\mathcal A}$ over all $k$-element subsets.

\begin{theorem} \label{Theorem_Traces_general}
 Let $\mathrm{eig}_m(\lambda,\tau,i)$ denote the eigenvalue of $\P_m$ on the eigenfunction $F_{\lambda,\tau,i}$  of Theorem \ref{Theorem_classification_of_ef}. Suppose that $\lambda$ has the general form \eqref{eq_general_lambda}. Then, using the notations \eqref{eq_family_character} and \eqref{eq_general_h}, we have
 \begin{equation}
 \label{eq_general_trace}
  \sum_{i=1}^{\dim V_{\lambda,\tau}} \mathrm{eig}_m(\lambda,\tau,i) = \dim \tau \sum_{k=1}^{\min(m+1,p)}  (-\theta)^{k-1} \sum_{\begin{smallmatrix}\mathcal A\subset \{1,2\dots,p\}\\ |\mathcal A|=k\end{smallmatrix}}  h^{\mathcal A}_{m+1-k} \cdot   \chi^\tau[\mathcal A; \mathbf n]\cdot \prod_{a\in \mathcal A} n_a.
 \end{equation}
\end{theorem}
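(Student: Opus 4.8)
The plan is to run the argument of the proof of Theorem~\ref{Theorem_Traces_distinct}, modified to account for the fact that when $\lambda$ has repeated parts $V_\lambda$ is the permutation module $\mathrm{Ind}_{S_{\mathbf n}}^{S_N}\mathbf{1}$ rather than the regular representation of $S_N$, where $S_{\mathbf n}=S_{n_1}\times\cdots\times S_{n_p}$ is the Young subgroup stabilizing the dominant monomial $x^\lambda$. As before, Lemma~\ref{Lemma_projector} reduces the statement to computing $N\,\mathrm{Trace}_{V_\lambda}[\pi_\tau T_1^m]$, so I would expand $T_1^m x^\gamma$ exactly as in \eqref{eq_x5}: a sum over $k$ and over tuples $\mathbf j=(j_1,\dots,j_{k-1})$ with strictly increasing degrees $\gamma_1<\gamma_{j_1}<\cdots<\gamma_{j_{k-1}}$, carrying a scalar prefactor $p_{k;m}$, the sign $(-\theta)^{k-1}$, and the $k$-cycle $\sigma=(1,j_1,\dots,j_{k-1})$. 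Since those degrees strictly increase, the positions $1,j_1,\dots,j_{k-1}$ lie in $k$ pairwise distinct blocks of $\lambda$; writing $\mathcal A=\{a_1<\cdots<a_k\}\subseteq\{1,\dots,p\}$ for the set of blocks visited (so $\gamma_1$ lies in $a_k$, the block of smallest degree among these), the cycle $\sigma$ permutes exactly the level sets indexed by $\mathcal A$.

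The first key step is to identify $p_{k;m}$. In the path picture of $T_1^m$ --- at each of the $m$ applications of $T_1$ we either ``stay'' (first term of \eqref{eq_x4}) or ``jump'' (a transposition from the second term, which moves the top degree to a strictly larger value) --- one checks that the scalar picked up whenever the degree of $x_1$ sits in block $a$ equals $d_a+\theta(n_{a+1}+\cdots+n_p)=\tilde\ell_a$. The observation that makes this robust against the earlier swaps is that the count $\#\{j:\gamma_j<v\}$ in \eqref{eq_x4} depends only on the value $v$, not on the current arrangement, since every monomial in the $S_N$-orbit has the same multiset of exponents. Summing over the $\binom{m}{k-1}$ possible placements of the $k-1$ jumps --- equivalently over the distribution of the $m+1-k$ ``stays'' among the $k$ blocks --- therefore gives $p_{k;m}=h_{m+1-k}(\tilde\ell_{a_1},\dots,\tilde\ell_{a_k})=h^{\mathcal A}_{m+1-k}$, independently of which representatives $j_a$ were chosen inside each block.

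The second key step is the character sum. For a fixed basis monomial $x^\gamma$ and a fixed term with cycle $\sigma$, the trace selects those $g$ for which $g\sigma$ fixes $x^\gamma$, i.e.\ $g$ runs over the coset $\mathrm{Stab}(x^\gamma)\sigma^{-1}$, so the relevant contribution is $\sum_{h\in\mathrm{Stab}(x^\gamma)}\chi^\tau(h\sigma^{-1})$. Now $\mathrm{Stab}(x^\gamma)=wS_{\mathbf n}w^{-1}$ for a permutation $w$ sending the $a$-th block of the standard Young subgroup onto the level set $\{i:\gamma_i=d_a\}$, and $w^{-1}\sigma^{-1}w$ is a $k$-cycle joining those standard blocks in increasing order; by centrality of $\chi^\tau$, together with the Remark following \eqref{eq_family_character} that the choice of representatives inside the blocks is irrelevant, $\sum_{h\in\mathrm{Stab}(x^\gamma)}\chi^\tau(h\sigma^{-1})=\sum_{\tilde g\in S_{\mathbf n}}\chi^\tau(\tilde g\,\mathfrak c)=(n_1!\cdots n_p!)\,\chi^\tau[\mathcal A;\mathbf n]$. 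It then remains to count: for a fixed $\mathcal A$ there are $(N-1)!\,n_{a_k}/(n_1!\cdots n_p!)$ basis monomials with $\gamma_1$ in block $a_k$, each admitting $\prod_{i=1}^{k-1}n_{a_i}$ admissible tuples $\mathbf j$, hence $(N-1)!\,(\prod_{a\in\mathcal A}n_a)/(n_1!\cdots n_p!)$ pairs $(\gamma,\mathbf j)$ with block set $\mathcal A$; multiplying by the character factor $n_1!\cdots n_p!$ and by the coefficient $\dim\tau/(N-1)!$ of $N\pi_\tau$, the factorials cancel and leave $\dim\tau\prod_{a\in\mathcal A}n_a$, producing exactly \eqref{eq_general_trace}. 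The summation is truncated at $k\le\min(m+1,p)$ because one cannot place $k-1$ distinct blocks strictly below the block of $\gamma_1$ once $k-1>p-1$.

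The step that will need the most care is the character computation: tracking the cyclic order of the blocks through the conjugation $w^{-1}\sigma^{-1}w$ so that it truly matches the cycle $\mathfrak c$ appearing in \eqref{eq_family_character}, and invoking the Remark there to absorb the freedom in the representatives. By contrast, the identification of the ``stay'' coefficient rests on the single clean fact that $\#\{j:\gamma_j<v\}$ is arrangement-independent, and the remaining multinomial bookkeeping is routine. As a consistency check, setting all $n_a=1$ (so $p=N$, $S_{\mathbf n}$ trivial, $\tilde\ell_a=\ell_a$, and the $h^{\mathcal A}$ summing to $h^{(k)}$) collapses \eqref{eq_general_trace} back to \eqref{eq_distinct_trace}.
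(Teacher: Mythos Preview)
Your proposal is correct and follows essentially the same route as the paper: reduce via Lemma~\ref{Lemma_projector} to $N\,\mathrm{Trace}_{V_\lambda}[\pi_\tau T_1^m]$, expand $T_1^m x^\gamma$ as a sum over increasing chains of degrees producing a $k$-cycle, identify the scalar prefactor with $h^{\mathcal A}_{m+1-k}$, and recognize the surviving character sum over the stabilizer of $x^\gamma$ as $(n_1!\cdots n_p!)\,\chi^\tau[\mathcal A;\mathbf n]$. Your write-up is in fact a bit more explicit than the paper's in two places --- the ``path picture'' justification that each stay contributes $\tilde\ell_a$ via the arrangement-independence of $\#\{j:\gamma_j<v\}$, and the explicit conjugation by $w$ reducing $\mathrm{Stab}(x^\gamma)$ to the standard Young subgroup --- but the underlying argument is the same.
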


If $\lambda$ has distinct parts, i.e.\ $n_1=n_2=\dots=n_N=1$, then there is no averaging in \eqref{eq_family_character} and $ \chi^\tau[\mathcal A; \mathbf n]=\chi^\tau \bigl( (12\dots k) \bigr)$, where $k=|\mathcal A|$. Hence, Theorem \ref{Theorem_Traces_general} matches Theorem \ref{Theorem_Traces_distinct}.

If $\tau=(N)$ is the trivial representation, then the eigenfunction is the Jack polynomial (see Lemma \ref{Lemma_Symmetric_is_Jack} for some details) and simultaneously $\chi^\tau[\mathcal A; \mathbf n]=1$. Yet, the expression we get in \eqref{eq_general_trace} is slightly different from the one of Theorem \ref{Theorem_symmetric_ef}, and we prove their equivalence in Lemma \ref{Lemma_symmetric_ev_match} below.

If $\tau=(1^N)$ is the sign representation, then $\chi^{\tau}[\mathcal A; \mathbf n]=0$, unless $\lambda$ has all distinct parts. In fact, there is no $(1^N)$-type component in $V_\lambda$ for $\lambda$ with non-trivial multiplicities of parts.

Explicit evaluation of $\chi^{\tau}[\mathcal A; \mathbf n]$ for $\tau=(N-1,1)$ is later given in Theorem \ref{Theorem_character_sum_fundamental} and the $p=2$ case is explained in Corollary \ref{Corollary_ab}.

\begin{proof}[Proof of Theorem \ref{Theorem_Traces_general}]
The proof is an extended version of the argument in Theorem \ref{Theorem_Traces_distinct}. Let us compute $(T_1)^m x^{\gamma}$ for $\gamma^+=\lambda$, just as in \eqref{eq_x5}, but now allowing the coordinates of $\gamma_i$ to coincide. The answer has the same form:
\begin{equation}
 (T_1)^m x^{\gamma}=\sum_{k=1}^{m+1}\sum_{j_1,\dots,j_{k-1}} p_{k;m}[\gamma_1,\gamma_{j_1},\dots,\gamma_{j_{k-1}}]\cdot  (-\theta)^{k-1}(1,j_{k-1})(1, j_{k-2})\cdots (1,j_{1}) x^{\gamma},
\end{equation}
where $j_1,\dots,j_{k-1}$ are distinct indices, such that each of them corresponds to distinct $\gamma_{j_{k-1}}>\gamma_{j_{k-2}}>\dots>\gamma_{j_{1}}>\gamma_1$. We again represent $g$ as
$$
 g=\tilde g\cdot  (1,j_{1})  \cdots (1,j_{k-2})(1, j_{k-1}),
$$
and arrive at a generalization of \eqref{eq_x6}:
\begin{multline}
\label{eq_x16}
  \sum_{i=1}^{\dim V_{\lambda,\tau}} \mathrm{eig}_m(\lambda,\tau,i) =N \mathrm{Trace}_{V_{\lambda}} \Bigl( \pi_\tau (T_1)^m\Bigr)= \frac{\dim \tau}{(N-1)!}  \sum_{k=1}^{m+1} (-\theta)^{k-1}
 \\ \times \sum_{\gamma}  \sum_{j_1,\dots,j_{k-1}}  \sum_{\tilde g\in S_N\mid \tilde g\gamma=\gamma} \chi^\tau\bigl(\tilde g \cdot (1,j_{1})  \cdots (1,j_{k-2})(1, j_{k-1}) \bigr)  p_{k;m}[\gamma_1,\gamma_{j_1},\dots,\gamma_{j_{k-1}}]
\end{multline}
What remains is to carefully match the last formula with the claimed \eqref{eq_general_trace}. First, the summation index $k$ runs from $1$ to $m+1$ in \eqref{eq_x16}, however, for $k>p$, it is impossible to choose $k$ distinct $\gamma_{j_{k-1}}>\gamma_{j_{k-2}}>\dots>\gamma_{j_{1}}>\gamma_1$ and the sum is empty; hence, we can restrict the upper bound to be $\min(k+1,p)$.

Next, the summation over all $\gamma$ involves
$$
 \frac{N!}{(n_1)! (n_2)!\cdots (n_p)!}
$$
terms. Absorbing the factorials from the denominator into the character summation (they are present in \eqref{eq_family_character}) and cancelling $N!$ with $(N-1)!$ in denominator in \eqref{eq_x16}, we are left with a sum involving a single fixed $\gamma$ and $ \pi_\tau \sum_{i=1}^N (T_i)^m$.

We claim that the summation over $\tilde g$ in \eqref{eq_x16} gives the character averages of \eqref{eq_family_character}. Indeed, note that
$$
(1,j_{1})  \cdots (1,j_{k-2})(1, j_{k-1})=(j_{k-1}, j_{k-2},\dots,j_{1},1)
$$
and the corresponding degrees $\gamma_{j_{k-1}}, \gamma_{j_{k-2}},\dots,\gamma_{1}$ are arranged in the decreasing order, which correspond to increasing indices of $d_i$ --- matching the cycle $\mathfrak c$ used in \eqref{eq_family_character}. The stabilizer of $\gamma$, i.e.\ the subgroup $\{\tilde g\in S_N\mid \tilde g\gamma=\gamma\}$ is not necessarily $S_{n_1}\times S_{n_2}\times \dots S_{n_p}$, but it is isomorphic to this group, with isomorphism obtained by renaming $\{1,2,\dots,N\}$. Since the characters are central, this leads to the same function as \eqref{eq_family_character}.

It remains to identify the sums of $p_{k;m}[\gamma_1,\gamma_{j_1},\dots,\gamma_{j_{k-1}}]$. Note that when we act with $(T_i)^m$ using its definition \eqref{eq_x4}, the factors $\left(\gamma_i + \theta \#\{j\mid \gamma_j<\gamma_i\}\right)$ correspond to various $\tilde \ell_a$. On the other hand, each application
of a term from $\theta \sum_{j\mid \gamma_j>\gamma_i} (i,j) x^{\gamma}$, leads to addition of an additional index ${j_a}$. In this index, only the value of $\gamma_{j_a}$ matters. The set $\mathcal A$ in \eqref{eq_general_trace} is identified with $\{\gamma_1,\gamma_{j_1},\dots,\gamma_{j_{k-1}}\}$ in \eqref{eq_x16}, leading to the combinatorial factor $\prod_{a\in \mathcal A} n_a$, counting the number of ways to choose indices $\gamma_{j_a}$ corresponding to $\mathcal A$ (the $n_a$ corresponding to $\gamma_1$ should be also accounted, as it arises from the first application of one of the operators $T_i$ in $\sum_{i=1}^N (T_i)^m$). Once the set $\mathcal A$ is fixed, the sums of products of $\tilde \ell_a$ coming from $\left(\gamma_i + \theta \#\{j\mid \gamma_j<\gamma_i\}\right)$ in $T_i$ combine precisely into $h^{\mathcal A}_{m+1-k}$ of \eqref{eq_general_trace}.
\end{proof}

\begin{lemma} \label{Lemma_symmetric_ev_match} For the trivial character, $\tau=(N)$, the formulas of Theorems \ref{Theorem_Traces_general} and \ref{Theorem_symmetric_ef} are the same.
\end{lemma}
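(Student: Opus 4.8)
The plan is to verify that the two formulas have the same generating function in $m$, relying on Proposition \ref{Proposition_sym_gen_funct} on the Theorem \ref{Theorem_symmetric_ef} side and on a direct summation on the Theorem \ref{Theorem_Traces_general} side.

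First I would record how the numbers $\ell_i=\lambda_i+\theta(N-i)$ are distributed among the blocks of \eqref{eq_general_lambda}. If the index $j$ lies in the $i$-th block, i.e.\ $n_1+\dots+n_{i-1}<j\le n_1+\dots+n_i$, then $\lambda_j=d_i$ and $\ell_j=d_i+\theta(N-j)$; as $j$ runs through the block, $N-j$ runs through the consecutive integers $n_{i+1}+\dots+n_p,\ \dots,\ n_{i+1}+\dots+n_p+n_i-1$. Hence the multiset $\{\ell_j:\ j\text{ in block }i\}$ equals $\{\tilde\ell_i,\ \tilde\ell_i+\theta,\ \dots,\ \tilde\ell_i+(n_i-1)\theta\}$. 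Substituting this into Proposition \ref{Proposition_sym_gen_funct} and grouping the factors block by block, the factors within the $i$-th block telescope:
\[
\prod_{j\text{ in block }i}\frac{1-(\ell_j+\theta)z}{1-\ell_j z}=\prod_{a=0}^{n_i-1}\frac{1-(\tilde\ell_i+(a+1)\theta)z}{1-(\tilde\ell_i+a\theta)z}=\frac{1-(\tilde\ell_i+n_i\theta)z}{1-\tilde\ell_i z},
\]
so that $1-\theta z\sum_{m\ge0}\mathrm{eig}_m(\lambda)z^m=\prod_{i=1}^p\frac{1-(\tilde\ell_i+n_i\theta)z}{1-\tilde\ell_i z}$.

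Next I would compute the generating function of the right-hand side of \eqref{eq_general_trace} for $\tau=(N)$, where $\dim\tau=1$ and $\chi^\tau[\mathcal A;\mathbf n]=1$. Writing $E_m$ for that right-hand side and using $\sum_{s\ge0}h_s(\tilde\ell_{a_1},\dots,\tilde\ell_{a_k})z^s=\prod_{j}(1-\tilde\ell_{a_j}z)^{-1}$, interchanging the sum over $m$ with the sum over subsets $\mathcal A\subset\{1,\dots,p\}$ gives, with $w_a=n_a/(1-\tilde\ell_a z)$,
\[
\sum_{m\ge0}E_m z^m=\sum_{k\ge1}(-\theta z)^{k-1}\,e_k(w_1,\dots,w_p)=\frac{1}{-\theta z}\Bigl(\prod_{a=1}^p(1-\theta z\,w_a)-1\Bigr),
\]
using $\sum_{k\ge0}(-\theta z)^k e_k(w)=\prod_a(1-\theta z\,w_a)$. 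Hence $1-\theta z\sum_{m\ge0}E_m z^m=\prod_{a=1}^p\bigl(1-\tfrac{\theta z\,n_a}{1-\tilde\ell_a z}\bigr)=\prod_{a=1}^p\frac{1-(\tilde\ell_a+\theta n_a)z}{1-\tilde\ell_a z}$, which is exactly the product obtained in the previous step.

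Comparing the two displays, the generating functions $\sum_m\mathrm{eig}_m(\lambda)z^m$ and $\sum_m E_m z^m$ coincide, hence so do their coefficients for every $m$, which is the assertion of the lemma. Every step is elementary; the only place that calls for a little care is the bookkeeping in the second step --- interchanging the summations over $m$ and over the subsets $\mathcal A$ and recognizing the elementary symmetric polynomial $e_k(w)$ --- together with the (harmless) observation that both formulas involve only finitely many terms for each fixed $m$, so that the formal power series manipulations are valid coefficientwise.
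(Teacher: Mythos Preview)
Your proof is correct and follows essentially the same route as the paper's: both compute the generating function $1-\theta z\sum_m(\cdot)z^m$ on each side, reduce the Theorem~\ref{Theorem_symmetric_ef} side via Proposition~\ref{Proposition_sym_gen_funct} and blockwise telescoping to $\prod_{i=1}^p\frac{1-(\tilde\ell_i+n_i\theta)z}{1-\tilde\ell_i z}$, and obtain the same product on the Theorem~\ref{Theorem_Traces_general} side by summing over subsets $\mathcal A$. The paper presents the latter step more tersely (``repeating the argument of Proposition~\ref{Proposition_sym_gen_funct}'') and frames both sides as matrix powers before passing to generating functions, but the substance is identical to what you wrote.
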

\begin{proof} The formula \eqref{eq_Jack_ev} can be rewritten in terms of the $m$-th power of $N\times N$ matrix as
\begin{equation}
\label{eq_x12}
\begin{pmatrix}
1 & 1 & 1 & \ldots & 1%
\end{pmatrix}%
\begin{pmatrix}
\ell_{1} & -\theta & -\theta & \ldots & -\theta\\
0 & \ell_{2} & -\theta & \ldots & -\theta\\
0 & 0 & \ell_{3} & \ldots & -\theta\\
\vdots & \vdots & \vdots & \ddots & \vdots\\
0 & 0 & 0 & \ldots &  \ell_{N}%
\end{pmatrix}
^{m}%
\begin{pmatrix}
1\\
1\\
1\\
\vdots\\
1
\end{pmatrix},
\end{equation}
where $\ell_i=\lambda_i+\theta(N-i)$. On the other hand, the formula \eqref{eq_general_trace} can rewritten in terms of $m$-th power of $p\times p$ matrix as
\begin{equation}
\label{eq_x13}
\begin{pmatrix}
n_{1} & n_{2} & n_{3} & \ldots & n_{p}%
\end{pmatrix}%
\begin{pmatrix}
\tilde \ell_{1} & -n_{2}\theta & -n_{3}\theta & \ldots & -n_{p}\theta\\
0 & \tilde \ell_{2} & -n_{3}\theta & \ldots & -n_{p}\theta\\
0 & 0 & \tilde \ell_{3} & \ldots & -n_{p}\theta\\
\vdots & \vdots & \vdots & \ddots & \vdots\\
0 & 0 & 0 & \ldots & \tilde \ell_{p}%
\end{pmatrix}
^{m}%
\begin{pmatrix}
1\\
1\\
1\\
\vdots\\
1
\end{pmatrix},
\end{equation}
where $\tilde \ell_i$ are a subset of $\ell_i$ given by $\tilde \ell_i= d_i + \theta (n_{i+1}+n_{i+2}+\dots+n_p)$
in the notation \eqref{eq_general_lambda}. Our task is to prove \eqref{eq_x12}=\eqref{eq_x13}. One can do this directly, but our approach is to proceed
through generating functions. On the \eqref{eq_x12} side, the generating function was computed in Proposition \ref{Proposition_sym_gen_funct}. Let $\widetilde{\mathrm{eig}}_m(\lambda)$ denote the eigenvalues computed by \eqref{eq_x13}. Repeating the argument of Proposition \ref{Proposition_sym_gen_funct}, we have
\begin{multline*}
 1-\theta z \sum_{m=0}^{\infty}  \widetilde{\mathrm{eig}}_m(\lambda) z^m =
  \sum_{\begin{smallmatrix} \mathcal A\subset \{1,\dots,p\} \end{smallmatrix}}  (-\theta z)^{|\mathcal A|}
   \prod_{a\in \mathcal A} \frac{n_a}{1-\tilde \ell_{a}z}=\prod_{i=1}^p\left(1-\frac{n_i \theta z}{1-\tilde \ell_i z}\right)\\=\prod_{i=1}^p\frac{1-(\tilde \ell_i+n_i \theta) z}{1-\tilde \ell_i z}.
\end{multline*}
The last expression matches the right-hand side of \eqref{eq_x8}, once we notice the telescoping cancellations in \eqref{eq_x8}.
\end{proof}

\subsection{$(N-1,1)$ isotype}

While the formula of Theorem \ref{Theorem_Traces_general} is very general, it produces an answer in terms of the functions $\chi^\tau[\mathcal A; \mathbf n]$. Evaluation of these functions is a separate task. As we have already mentioned right after Theorem \ref{Theorem_Traces_general}, for $\tau=(N)$ and $\tau=1^N$ the evaluation is particularly simple. Another case where it is fully explicit is $\tau=(N-1,1)$.

\begin{theorem}
 \label{Theorem_character_sum_fundamental} Let $\tau=(N-1,1)$ and  $\mathbf n=(n_1, n_2, \dots, n_p>0)$. Then we have
\begin{equation} \label{eq_N11_formula}
 \chi^{\tau}[\mathcal A; \mathbf n]=p-1-\sum_{a\in \mathcal A} \frac{1}{n_a}.
\end{equation}
\end{theorem}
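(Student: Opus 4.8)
The plan is to combine the elementary character formula for the standard representation of $S_N$ with a fixed‑point count. Recall that $\tau=(N-1,1)$ is the standard (reflection) representation, so that for every $g\in S_N$ one has $\chi^{\tau}(g)=\#\{\,i\mid g(i)=i\,\}-1$. Substituting this into the definition \eqref{eq_family_character}, the constant $-1$ contributes precisely $-1$ to $\chi^{\tau}[\mathcal A;\mathbf n]$, so Theorem \ref{Theorem_character_sum_fundamental} reduces to showing
$$
 \frac{1}{n_1!\cdots n_p!}\sum_{\tilde g\in S_{n_1}\times\dots\times S_{n_p}}\#\{\,i\mid (\tilde g\,\mathfrak c)(i)=i\,\}=p-\sum_{a\in\mathcal A}\frac1{n_a}.
$$

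First I would set up the block notation: write $\{1,\dots,N\}$ as the disjoint union of consecutive blocks $B_1,\dots,B_p$ with $|B_r|=n_r$, so that $S_{n_1}\times\dots\times S_{n_p}$ is exactly the subgroup of $S_N$ preserving each $B_r$ setwise, and the $k$‑cycle $\mathfrak c$ attached to $\mathcal A=\{a_1<\dots<a_k\}$ has support $S$ consisting of one element in each of the blocks $B_{a_1},\dots,B_{a_k}$, cyclically permuted. The key observation is that for every $\tilde g$ in this subgroup: (i) if $i\notin S$ then $(\tilde g\,\mathfrak c)(i)=\tilde g(i)$, so $i$ is fixed by $\tilde g\,\mathfrak c$ if and only if it is fixed by $\tilde g$; and (ii) no $i\in S$ is fixed by $\tilde g\,\mathfrak c$, because writing $i\in B_{a_j}$ the element $\mathfrak c(i)$ lies in a different block $B_{a_{j'}}$ (the $a_j$ being pairwise distinct) and $\tilde g$ keeps $\mathfrak c(i)$ inside $B_{a_{j'}}$, whence $(\tilde g\,\mathfrak c)(i)\ne i$. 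Consequently $\#\{\,i\mid (\tilde g\,\mathfrak c)(i)=i\,\}=\#\{\,i\in\{1,\dots,N\}\setminus S\mid \tilde g(i)=i\,\}$ for every such $\tilde g$.

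Next I would sum over $\tilde g$ and interchange the two summations, using that for a fixed $i\in B_r$ the number of $\tilde g\in S_{n_1}\times\dots\times S_{n_p}$ with $\tilde g(i)=i$ equals $(n_r-1)!\prod_{s\ne r}n_s!$, i.e.\ the fraction $1/n_r$ of the group. This yields
$$
 \frac{1}{n_1!\cdots n_p!}\sum_{\tilde g}\#\{\,i\mid (\tilde g\,\mathfrak c)(i)=i\,\}=\sum_{i\in\{1,\dots,N\}\setminus S}\frac{1}{n_{r(i)}},
$$
where $r(i)$ is the index of the block containing $i$. The sum over all $i\in\{1,\dots,N\}$ equals $\sum_{r=1}^{p}n_r\cdot\frac1{n_r}=p$, and removing the $k$ points of $S$, which lie in the pairwise distinct blocks $B_a$ ($a\in\mathcal A$), subtracts exactly $\sum_{a\in\mathcal A}\frac1{n_a}$. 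This gives the displayed identity, hence \eqref{eq_N11_formula}.

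I do not anticipate a substantial difficulty: once one recalls $\chi^{(N-1,1)}(g)=\#\{\,i\mid g(i)=i\,\}-1$, the whole argument is a short double count, and the only thing to monitor is that $S$ meets each block $B_a$, $a\in\mathcal A$, in exactly one point, so that step (ii) applies and precisely the $k$ weighted terms $1/n_a$ drop out. The lone degenerate case is $|\mathcal A|=1$, where $\mathfrak c$ is the trivial cycle and $S=\emptyset$; there the same count returns $\chi^{\tau}[\{a\};\mathbf n]=p-1$ rather than $p-1-\frac1{n_a}$, so \eqref{eq_N11_formula} is to be understood for $|\mathcal A|\ge 2$.
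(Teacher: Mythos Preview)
Your argument is correct and is essentially the paper's own proof: both invoke $\chi^{(N-1,1)}(g)=\#\{i:g(i)=i\}-1$, interchange the two summations, and for each point $i$ count the fraction of $\tilde g$ fixing it, using that any $i$ in the support of $\mathfrak c$ is sent to a different block and hence can never be fixed by $\tilde g\,\mathfrak c$. Your closing remark about $|\mathcal A|=1$ is also on target and is a caveat the paper's computation tacitly needs as well: when $\mathfrak c$ is a $1$-cycle it is the identity, the ``never fixed'' step no longer applies, and the same double count yields $p-1$ rather than $p-1-1/n_a$.
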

\begin{proof} The irreducible representation of $S_N$ of type $\tau=(N-1,1)$ is the standard (reflection) $(N-1)$-dimensional representation in the space of vectors $(x_1,\dots,x_N)$ with $\sum_{i=1}^N x_i=0$ and action $\sigma\circ (x_1,\dots,x_N)=(x_{\sigma^{-1}(1)},\dots,x_{\sigma^{-1}(N)})$. Evaluating the character as trace, we get an expression in terms of the number of the fixed points:
$$
  \chi^{\tau}(\sigma)=\#\{1\le i \le N\mid \sigma(i)=i\}-1, \qquad \tau=(N-1,1).
$$
Recalling the definition \eqref{eq_family_character}, we need to compute
\begin{equation}
\label{eq_x18}
\sum_{\tilde g\in S_{n_1}\times S_{n_2}\times  \dots \times S_{n_p}}\,\, \chi^\tau(\tilde g \cdot \mathfrak c)
=\sum_{\tilde g} \sum_i \mathbf 1_{ \tilde g \cdot \mathfrak c (i)=i}  - \prod_{j=1}^p (n_j)!,
\end{equation}
where $\mathfrak c$ is a $|\mathcal A|$-cycle permuting the groups corresponding to the elements $a\in\mathcal A$. Changing the order of summation in \eqref{eq_x18}, we get
$$
 \sum_{q=1}^p\, \sum_{i=n_1+\dots+n_{q-1}+1}^{n_1+\dots+n_q} \, \sum_{\tilde g\in S_{n_1}\times S_{n_2}\times  \dots \times S_{n_p}} \mathbf 1_{ \tilde g \cdot \mathfrak c (i)=i}  - \prod_{j=1}^p (n_j)!.
$$
In the internal sum over $\tilde g$, for the symmetric groups $S_{n_j}$ which permute the elements other than $ \mathfrak c (i)$ there are no restrictions and all $(n_j)!$ permutations are possible. However, if $\mathfrak c(i)$ belongs to the indices permuted by $S_{n_j}$, then there are two options: if $\mathfrak c(i)=i$, then $(n_j-1)!$ permutations satisfy the restriction $ \tilde g \cdot \mathfrak c (i)=i$, otherwise no permutations satisfy the restriction. Hence, summing over $\tilde g$ and then $i$, we get
$$\sum_{q=1}^p \left[\mathbf 1_{q\in \mathcal A} (n_q-1)+\mathbf 1_{q\not\in\mathcal A} n_q \right] (n_q-1)! \left[\prod_{j\ne q} (n_j)!\right]
 - \prod_{j=1}^p (n_j)!.
$$
The last expression can be transformed into
$$
(p-1)\prod_{j=1}^p (n_j)! - \sum_{q=1}^p \mathbf 1_{q\in \mathcal A} (n_q-1)! \left[\prod_{j\ne q} (n_j)!\right].
$$
Dividing by $\prod\limits_{j=1}^p (n_j)!$, we get \eqref{eq_N11_formula}.
\end{proof}

\subsection{$a\dots ab\dots b$ case with $p=2$} Another situation where the general formula of Theorem \ref{Theorem_Traces_general} is fully explicit is $p=2$. In fact, in this special case we are able to compute all eigenvalues $\mathrm{eig}_m(\lambda,\tau,i)$ rather than only their sums over $i$. We fix $N=1,2,\dots$ and $1\le \eta \le N-1$ and set the multiplicities to be $(n_1,n_2)= (N-\eta,\eta)$. We further choose $a>b\ge 0$ and set
\begin{equation}
\label{eq_two_row}
 \lambda= \underbrace{a,a,\dots,a}_{N-\eta}, \underbrace{b,b,\dots,b}_{\eta}.
\end{equation}
\begin{corollary} \label{Corollary_ab}
 For $\lambda$ in \eqref{eq_two_row} and $\tau=(N-k,k)$ with $0\le k\le \min(\eta,N-\eta)$, we have
 $$
   \mathrm{eig}_m(\lambda,\tau,i) = (a+\theta \eta)^m (N-\eta) + b^m \eta-\theta \frac{(a+\theta \eta)^m- b^m}{a+\theta \eta -b}  \bigl(\eta (N-\eta) -k(N-k+1)\bigr) ,
 $$
 for all $1\le i\le \dim V_{\lambda,\tau}$. For other $\tau$, the space $V_{\lambda,\tau}$ is empty.
\end{corollary}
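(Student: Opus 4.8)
The plan is to sidestep the general trace formula and compute the relevant operator completely explicitly: in the two-row situation $T_1^m+\dots+T_N^m$ collapses to a scalar plus a multiple of a single combinatorial operator, so the whole spectrum can be read off at once.

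\smallskip

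First I would identify $V_\lambda$ with a permutation module. A monomial $x^\gamma$ with $\gamma^+=\lambda$ is determined by the set $A=\{i:\gamma_i=a\}\subset\{1,\dots,N\}$ with $|A|=N-\eta$, so $V_\lambda\cong M^{(N-\eta,\eta)}$ with $\sigma\cdot e_A=e_{\sigma(A)}$. Young's rule gives $M^{(N-\eta,\eta)}=\bigoplus_{k=0}^{\min(\eta,N-\eta)}S^{(N-k,k)}$, every multiplicity being one; in particular $V_{\lambda,\tau}=0$ unless $\tau=(N-k,k)$ with $0\le k\le\min(\eta,N-\eta)$, and then $\dim V_{\lambda,\tau}=\dim\tau$. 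Since the multiplicity is one, $V_{\lambda;\tau}$ is an irreducible $S_N$-module, and because $T_1^m+\dots+T_N^m$ has a symmetric (hence $S_N$-equivariant) expression in the $x_i$, Schur's lemma forces it to act as a scalar on $V_{\lambda;\tau}$. By the proof of Theorem \ref{Theorem_classification_of_ef} this scalar is exactly $\mathrm{eig}_m(\lambda,\tau,i)$, which is therefore the same for every $i$.

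\smallskip

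Next I would compute $T_1^m+\dots+T_N^m$ on $M^{(N-\eta,\eta)}$ directly from \eqref{eq_x4}. With $\tilde\ell_1=a+\theta\eta$, one checks that $T_i e_A=\tilde\ell_1 e_A$ when $i\in A$, while $T_i e_A=b\,e_A-\theta\sum_{j\in A}e_{A\cup\{i\}\setminus\{j\}}$ when $i\notin A$; in the latter case every monomial produced already contains $i$ in its $a$-set, so a one-line induction yields, for $i\notin A$,
\[
 T_i^{\,m} e_A=b^m e_A-\theta\,h_{m-1}(b,\tilde\ell_1)\sum_{j\in A}e_{A\cup\{i\}\setminus\{j\}},\qquad h_{m-1}(b,\tilde\ell_1)=\frac{\tilde\ell_1^{\,m}-b^m}{\tilde\ell_1-b},
\]
and of course $T_i^{\,m} e_A=\tilde\ell_1^{\,m} e_A$ for $i\in A$. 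Summing over $i$ gives
\[
 \sum_{i=1}^{N} T_i^{\,m}=\bigl[(N-\eta)\,\tilde\ell_1^{\,m}+\eta\,b^m\bigr]\,\mathrm{Id}-\theta\,h_{m-1}(b,\tilde\ell_1)\,\Delta ,
\]
where $\Delta\,e_A=\sum_{i\notin A,\ j\in A}e_{A\cup\{i\}\setminus\{j\}}$ is the adjacency operator of the Johnson graph on $(N-\eta)$-subsets of $\{1,\dots,N\}$.

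\smallskip

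Finally I would invoke the spectral decomposition of the Johnson scheme: the $(N-k,k)$-isotypic component $W_k$ of $M^{(N-\eta,\eta)}$ is an eigenspace of $\Delta$ with eigenvalue $(N-\eta-k)(\eta-k)-k=\eta(N-\eta)-k(N-k+1)$; this classical fact can also be obtained by writing $\Delta$ in terms of $\sum_{i<j}(i,j)$ and reading off the central character of $S^{(N-k,k)}$. Substituting this eigenvalue and $\tilde\ell_1=a+\theta\eta$ into the displayed formula gives precisely the asserted expression for $\mathrm{eig}_m(\lambda,(N-k,k),i)$, valid for all $i$. The only points needing care are the bookkeeping in the induction and the correct matching of the Johnson-scheme eigenspace $W_k$ with the irreducible $S^{(N-k,k)}$; everything else is routine. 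As consistency checks, $k=0$ recovers the two-row case of Theorem \ref{Theorem_symmetric_ef}, $k=1$ agrees with Theorem \ref{Theorem_Traces_general} combined with Theorem \ref{Theorem_character_sum_fundamental}, and summing the $\dim\tau$ equal eigenvalues reproduces \eqref{eq_general_trace} at $p=2$.
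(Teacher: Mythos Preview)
Your argument is correct and complete; the computation of $T_i^m e_A$, the identification of $\Delta$ with the Johnson adjacency, and the eigenvalue $(N-\eta-k)(\eta-k)-k$ on the $(N-k,k)$-isotype all check out.

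The route is genuinely different from the paper's. The paper does not compute $\sum_i T_i^m$ explicitly on $V_\lambda$; instead it first uses Schur's lemma exactly as you do, but then obtains the scalar by dividing the trace formula \eqref{eq_general_trace} by $\dim\tau$, which requires evaluating the averaged character $\chi^\tau[\mathcal A;\mathbf n]$ for $|\mathcal A|=2$. That evaluation is carried out by recognising $(S_N,S_{N-\eta}\times S_\eta)$ as a Gelfand pair and quoting the spherical function as a ${}_3F_2$/Hahn polynomial value, namely $1-\tfrac{k(N-k+1)}{\eta(N-\eta)}$. Your approach bypasses Theorem~\ref{Theorem_Traces_general} altogether and reaches the same number via the Johnson-scheme spectrum; these are of course two faces of the same classical fact, but your derivation is more self-contained and needs no external reference for the spherical function. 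The paper's approach, on the other hand, illustrates how the general machinery specialises, and makes the link to Hahn polynomials explicit. Either is fine here; your direct computation is arguably the cleaner proof of this particular corollary.
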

\begin{proof}
 We need some facts from the representation theory of symmetric groups, which can be found e.g.\ in \cite{dunkl1978addition}, or \cite[Section 3]{Stanton1984}, or \cite[Sections 6.1, 6.2]{ceccherini2008harmonic}. In particular, $V_\lambda$ expands into a direct multiplicity one sum of irreducible representations $\tau=(N-k,k)$, $0\le k\le \min(\eta,N-\eta)$.
 The operator $(T_1^m+\dots+T_N^M)$ commutes with the action of the symmetric group. Therefore, by Schur's lemma, it necessarily acts as a multiple of the identity in each $V_{\lambda;\tau}$ (which would not have been true if $\tau$ had multiplicity larger than $1$ in $\lambda$). Hence, the eigenvalues $\mathrm{eig}_m(\lambda,\tau,i)$ do not depend on $i$ and we can get them by dividing \eqref{eq_general_trace} by $\dim \tau$.

 Let us evaluate the ingredients of \eqref{eq_general_trace}. $(S_N,S_{N-\eta}\times S_\eta)$ is a Gelfand pair, which means that each irreducible representation of $S_N$ has at most one $S_{N-\eta}\times S_\eta$-invariant vector. Interpreting summations over $\tilde g \in S_{N-\eta}\times S_\eta$ in \eqref{eq_family_character} as projections on invariants, we identify $ \chi^\tau[\mathcal A; \mathbf n]$ with spherical functions corresponding to this invariant vector. The latter spherical function is evaluated in \cite[Corollary 2.2]{dunkl1978addition}, \cite[(3.15)]{Stanton1984},  \cite[Theorems 6.1.10, 6.2.3]{ceccherini2008harmonic} in terms of the hypergeometric function $_3F_2$, which can also be identified with the Hahn polynomial. The value at length $2$ cycle is given as
 \begin{equation}
  _3 F_2\left( \begin{smallmatrix}-k,\, k-N-1,\, -1 \\ \eta-N, -\eta  \end{smallmatrix} \Bigr|  1 \right)=1 + \frac{(-k) (k-N-1) (-1)}{(-\eta)(\eta-N) 1!}=1-\frac{k(N-k+1)}{\eta(N-\eta)}.
 \end{equation}
Hence, \eqref{eq_general_trace} specializes to:
\begin{multline*}
   \sum_{\begin{smallmatrix}\mathcal A\subset \{1,2\}\\ |\mathcal A|=1\end{smallmatrix}}  h^{\mathcal A}_{m}  \cdot \prod_{\alpha\in \mathcal A} n_\alpha
 -\theta \sum_{\begin{smallmatrix}\mathcal A\subset \{1,2\}\\ |\mathcal A|=2\end{smallmatrix}}  h^{\mathcal A}_{m-1} \cdot  \left(1-\frac{k}{\eta}\cdot \frac{N-\eta+1}{N-\eta }\right)\cdot \prod_{\alpha\in \mathcal A} n_\alpha.
 \\=(a+\theta \eta)^m (N-\eta) + b^m \eta-\theta \frac{(a+\theta \eta)^m- b^m}{a+\theta \eta -b}  \left(1-\frac{k}{\eta}\cdot \frac{N-k+1}{N-\eta }\right) \eta (N-\eta). \qedhere
 \end{multline*}
\end{proof}

\subsection{More complicated isotypes}
In the follow-up work, the spherical functions $\chi\left[  \mathcal{A},\mathbf{n}\right]  $ have
been computed for the hook isotypes $\left[  N-b,1^{b}\right]  $ for $p>b$ in
\cite{Dun25a} and for the isotypes $\left[  N-k,k\right]  $ for $p=3$ in
\cite{Dun25b}.

\section{Example: Eigenvalues for $N=3$.}
\label{Section_N3}

Here is the full list of the eigenvalues of the operator $\P_m$ for $N=3$. We keep using the notation \eqref{eq_x3}. When we speak about leading monomials in the description of eigenfunctions, we use the point of view of Theorem \ref{Theorem_classification_of_ef} and describe the part of $F_{\lambda,\tau,i}$ in $V_\lambda$.

\begin{enumerate}[label=(\roman*)]
 \item $\lambda=(a,a,a)$: For each $a\ge 0,$ $\P_m$ has an eigenvalue $3 a^2 m$ on the eigenfunction $x_1^a x_2^a x_3^a$.
 \item $\lambda=(a,a,b)$: For each $a>b \ge 0$, $\P_m$ has:
  \begin{enumerate}
   \item Eigenvalue $2 (a+\theta)^m + b^m - 2 \theta\frac{(a+\theta)^m-b^m}{a+\theta-b}$ on the eigenfunction with leading monomials $x_1^a x_2^a x_3^b + x_1^a x_2^b x_3^a+ x_1^b x_2^a x_3^a$.
   \item Two equal eigenvalues $2 (a+\theta)^m + b^m + \theta\frac{(a+\theta)^m-b^m}{a+\theta-b}$ on two eigenfunctions with leading monomials in the space spanned by $x_1^a x_2^a x_3^b - x_1^a x_2^b x_3^a$ and $x_1^a x_2^a x_3^b - x_1^b x_2^a x_3^a$.
   \end{enumerate}
 \item $\lambda=(a,b,b)$: For each $a>b \ge 0$, $\P_m$ has:
   \begin{enumerate}
    \item Eigenvalue  $(a+2\theta)^m + 2b^m - 2 \theta\frac{(a+2\theta)^m-b^m}{a+2\theta-b}$  on the eigenfunction with leading monomials $x_1^a x_2^b x_3^b + x_1^b x_2^a x_3^b+ x_1^b x_2^b x_3^a$.
    \item Two equal eigenvalues $(a+2\theta)^m + 2b^m + \theta\frac{(a+2\theta)^m-b^m}{a+2\theta-b}$  on two eigenfunctions with leading monomials in the space spanned by $x_1^a x_2^b x_3^b - x_1^b x_2^a x_3^b$ and $x_1^a x_2^b x_3^b - x_1^b x_2^b x_3^a$.
   \end{enumerate}
 \item $\lambda=(a,b,c)$: For each $a>b>c \ge 0$, $\P_m$ has (we denote $(\ell_1,\ell_2,\ell_3)=(a+2\theta,b+\theta,c)$):
   \begin{enumerate}
    \item Eigenvalue $h^{(1)}_m\bigl(\ell_1,\ell_2,\ell_3\bigr)-\theta h^{(2)}_{m-1}\bigl(\ell_1,\ell_2,\ell_3\bigr)+\theta^2  h^{(3)}_{m-2}\bigl(\ell_1,\ell_2,\ell_3\bigr)$ on the eigenfunction with leading monomials
    $$x_1^a x_2^b x_3^c +x_1^b x_2^a x_3^c+x_1^a x_2^c x_3^b+x_1^c x_2^b x_3^a+x_1^b x_2^c x_3^a+x_1^c x_2^a x_3^b.$$
    \item Two pairs of equal eigenvalues
    \begin{multline} \label{eq_irrational_ev}
      \left(  \ell_{1}^{m}+\ell_{2}^{m}+\ell_{3}^{m}\right)
      -\frac{\theta^{2}}{2}h_{m-2}\left(  \ell_{1},\ell_{2},\ell_{3}\right) \\  \pm\frac{\theta}{2}
      h_{m-2}\left(  \ell_{1},\ell_{2},\ell_{3}\right)  \sqrt{4\left(  \ell_{1}^{2}+\ell_{2}^{2}+\ell_{3}^{2}\right)  -4\left(  \ell_{1}\ell_{2}+\ell_{1}\ell_{3}+\ell_{2}\ell_{3}\right)  -3\theta^{2}}
   \end{multline}
     on four eigenfunctions with leading monomials in the space spanned by
    \begin{equation}
    \label{eq_x17}
    x_1^a x_2^b x_3^c-x_1^b x_2^c x_3^a,\quad x_1^a x_2^b x_3^c-x_1^c x_2^a x_3^b,\quad x_1^b x_2^a x_3^c-x_1^a x_2^c x_3^b,\quad x_1^b x_2^a x_3^c-x_1^c x_2^b x_3^a.
    \end{equation}
    \item Eigenvalue $h^{(1)}_m\bigl(\ell_1,\ell_2,\ell_3\bigr)+\theta h^{(2)}_{m-1}\bigl(\ell_1,\ell_2,\ell_3\bigr)+\theta^2  h^{(3)}_{m-2}\bigl(\ell_1,\ell_2,\ell_3\bigr)$ on the eigenfunction with leading monomials
     $$x_1^a x_2^b x_3^c -x_1^b x_2^a x_3^c-x_1^a x_2^c x_3^b-x_1^c x_2^b x_3^a+x_1^b x_2^c x_3^a+x_1^c x_2^a x_3^b.$$
   \end{enumerate}
\end{enumerate}

\medskip

(i.a), (ii.a), (iii.a), and (iv.a) are all instances of Theorem \ref{Theorem_symmetric_ef} or Theorem \ref{Theorem_Traces_general} with $\tau=(3)$. (ii.ab) and (iii.ab) are instances of Corollary \ref{Corollary_ab}. (iv.a) and (iv.c) are instances of Theorem \ref{Theorem_Traces_distinct} for $\tau=(3)$ and $\tau=(1,1,1)$, respectively. The sum of all four eigenvalues in (iv.b) matches Theorem \ref{Theorem_Traces_distinct} for $\tau=(2,1)$, where we have $\dim\tau=\chi^{\tau}(\mathrm{Id})=2$, $\chi^{\tau}\bigl( (1,2)\bigr)=0$, $\chi^{\tau}\bigl( (1,2,3)\bigr)=-1$. The individual eigenvalues in (iv.b) are not computed by any of our theorems; the formula \eqref{eq_irrational_ev} can be obtained by evaluating the matrices of the operators $T_1^m$, $T_2^m$, $T_3^m$ by diagonalization and then direct computation of the eigenvalues of $T_1^m+T_2^m+T_3^m$ (we can either use $6\times 6$ matrices in the space of all permutations of $x_1^a x_2^b x_3^c$ or $4\times 4$ matrices in the subspace spanned by \eqref{eq_x17}).


\section{Appendix: Basic properties of $\P_m$}
\label{Section_appendix}

In this section we discuss the basic properties of the operators $\P_m$, mentioned in Section \ref{Section_HP_operators}. These properties are well-known, we provide proofs only for the sake of being self-contained.

\begin{lemma} \label{Lemma_commute}
 The operators $\P_m$, $m=1,2,\dots$, mutually commute.
\end{lemma}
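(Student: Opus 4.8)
\textbf{Proof proposal for Lemma \ref{Lemma_commute}.}

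The plan is to verify that the operators $\P_m = \sum_{i=1}^N (x_i\D_i)^m$ commute by reducing the question to a commutation relation at the level of the individual building blocks $x_i\D_i$ and the Dunkl operators themselves. The well-known starting point is the commutativity of the Dunkl operators: $\D_i \D_j = \D_j \D_i$ for all $i,j$. From this I would derive the key intermediate relation for the operators $E_i := x_i\D_i$, namely that $[E_i, E_j]$ is expressible through transpositions, and more importantly that the full ``first-order'' operator $\sum_i E_i = \P_1$ together with the degree grading controls everything. The cleanest route, however, is the following: introduce the operators $\D_i$ acting on $\mathbb C_N$, note that $x_i$ and $\D_j$ satisfy the commutation relations $[\D_i, x_j] = \delta_{ij} + \theta\sum_{k\ne i}(i,k)$ when $i=j$ and $[\D_i,x_j] = -\theta (i,j)$ when $i \ne j$ (the rational Cherednik algebra relations), and then observe that the symmetric group acts by conjugation permuting the $E_i$.

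The key steps, in order, would be: (i) establish or recall that $\D_i\D_j = \D_j\D_i$; (ii) show that each $\P_m$ commutes with the $S_N$-action, i.e.\ $g\,\P_m\,g^{-1} = \P_m$ for all $g\in S_N$, which is immediate from the $S_N$-equivariance of the assignment $i \mapsto x_i\D_i$ (a transposition $(j,k)$ conjugates $x_i\D_i$ into $x_{(j,k)(i)}\D_{(j,k)(i)}$, so the sum is invariant); (iii) prove the crucial identity that $\P_m$ commutes with $\P_1$, and more generally set up an induction. The most efficient classical argument I would use is: let $\mathbf d = \sum_i x_i\partial_{x_i}$ be the degree operator and recall $\P_1 = \mathbf d + \theta\binom{N}{2} - \theta\sum_{i<j}(i,j)$; then show by direct computation that $x_i\D_i$ commutes with $\D_j$ up to lower-order-in-a-suitable-filtration terms, OR — the approach I actually favor — invoke the standard fact from Dunkl operator theory that the operators $\sum_i f(x_i\D_i)$ for polynomials $f$ all mutually commute because they all lie in the commutative subalgebra generated by the ``Cherednik-Dunkl'' elements. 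Concretely, one uses that $\P_m$ can be rewritten via the intertwining/triangularity structure so that $\P_m$ is, on each homogeneous component and each $V_\lambda$-filtration piece, simultaneously diagonalizable with the others.

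The main obstacle is that $x_i\D_i$ and $x_j\D_j$ genuinely do not commute (as emphasized in the paper's footnote), so one cannot argue termwise; the cancellations that make $\sum_i (x_i\D_i)^m$ commute with $\sum_i(x_i\D_i)^k$ are global. I expect the cleanest honest proof is the operator-algebraic one: work inside the rational Cherednik algebra $H_\theta(S_N)$, use the commuting family of Dunkl operators $\D_i$ to build the commuting family $u_i := x_i\D_i$... no — rather, use that $\P_m = \sum_i (x_i\D_i)^m$ equals (up to the group-algebra part) a power sum in the commuting Cherednik operators after a change of variables, OR simply cite that these are the Heckman–Polychronakos operators whose commutativity is \cite{heckman1991elementary,polychronakos1992exchange}. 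For a self-contained proof, I would carry out the induction: assuming $[\P_j,\P_k]=0$ for all $j,k \le m-1$, expand $[\P_m,\P_1]$ using the relation $[x_i\D_i,\ \P_1] = $ (something computable from $[\D_i,x_j]$ above), show this bracket vanishes using step (ii)'s $S_N$-invariance to kill the transposition terms, and then bootstrap from $[\P_m,\P_1]=0$ plus $[\P_m,\P_j]=0$ for $j<m$ to the general case via a standard commutator identity. The delicate bookkeeping of the transposition terms in $[\D_i,x_j]$ when iterating is where the real work lies, and I would organize it by filtering $\mathbb C_N$ by the dominance order (as in Lemma \ref{Lemma_triangularity}) so that the ``$T_i$ part'' is visibly symmetric and hence a commuting family on each graded piece, forcing $[\P_m,\P_k]$ to be strictly filtration-lowering and nilpotent, hence — being also $S_N$-equivariant and, by self-adjointness of Lemma \ref{Lemma_self_adjoint}, normal — identically zero.
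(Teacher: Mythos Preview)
Your proposal is more a survey of strategies than a proof, and none of the strategies is carried through. Two of them have genuine gaps.

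\textbf{The induction route.} You suggest proving $[\P_m,\P_1]=0$ first and then ``bootstrapping'' to $[\P_m,\P_k]=0$ via a ``standard commutator identity.'' There is no such identity here: the $\P_k$ are not generated by $\P_1$ in any algebraic sense (they are independent power sums of non-commuting operators), so knowing $[\P_m,\P_1]=0$ and $[\P_m,\P_j]=0$ for $j<m$ gives you no leverage on $[\P_m,\P_k]$ for $k\ge m$. This step is simply missing.

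\textbf{The filtration/self-adjointness route.} Your final paragraph proposes: on each graded piece $V_\lambda$ the operators $\P_m$ act as $\sum_i T_i^m$; if these commute, then $[\P_m,\P_k]$ is strictly filtration-lowering, hence nilpotent, and being skew-adjoint (commutator of self-adjoints) hence normal, it must vanish. The logical skeleton is fine, but the hypothesis is not: you assert that the $T_i$ part is ``visibly symmetric and hence a commuting family,'' and this is false as stated. $S_N$-invariance of $\sum_i T_i^m$ does not imply that $\sum_i T_i^m$ and $\sum_i T_i^k$ commute; and the individual $T_i$ do \emph{not} commute (check $T_1T_2$ versus $T_2T_1$ on $x_1^2x_2$ for $N=2$). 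So the commutativity of $\sum_i T_i^m$ on $V_\lambda$ is exactly as hard as the original statement, and you have not reduced the problem.

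The paper's proof avoids all of this by a direct commutator computation. The key identity, which you never write down, is
\[
[x_a\D_a,\,x_b\D_b]=\theta\,(x_a\D_a-x_b\D_b)\,(a,b),\qquad a\ne b,
\]
obtained from $[\D_a,x_b]=-\theta(a,b)$ and $\D_a\D_b=\D_b\D_a$. Iterating it (moving $(a,b)$ past powers of $x_b\D_b$ turns them into powers of $x_a\D_a$) gives a telescoping sum
\[
[x_a\D_a,\,(x_b\D_b)^m]=\theta\bigl((x_a\D_a)^m-(x_b\D_b)^m\bigr)(a,b),
\]
and one more iteration yields an explicit formula for $[(x_a\D_a)^l,(x_b\D_b)^m]$ whose sum over $a<b$ (after symmetrizing in $l\leftrightarrow m$) is manifestly zero. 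This is a two-page calculation with no induction, no filtration, and no appeal to self-adjointness; the single missing idea in your proposal is precisely this commutator formula.
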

\begin{proof}
 We develop a table of commutators. We start by splitting the Dunkl operator of Definition \ref{Definition_Dunkl} into two parts and write $\D_i=\frac{\partial}{\partial x_i} + \theta \Delta_i$ with $\Delta_i=\sum_{j\ne i} \frac{1-(i,j)}{x_i-x_j}$.
 We claim
 \begin{equation}
 \label{eq_Delta_commute}
  \Delta_a \Delta_b = \Delta_b \Delta_a,\qquad 1\le a,b \le N.
 \end{equation}
 In order to prove \eqref{eq_Delta_commute} for $a\ne b$, we plug the definition of $\Delta_a$ into $\Delta_a\Delta_b$ and get
 $$
  \sum_{j\ne a} \sum_{j'\ne b} \frac{1-(a,j)}{x_a-x_j} \cdot \frac{1-(b,{j'})}{x_b-x_{j'}}.
 $$
 Note that the double sum splits into three types of summands: involving two, three, and four distinct variables $x_i$. The parts involving two and four variables are obviously the same in $\Delta_a \Delta_b$ and in $\Delta_b \Delta_a$. The part involving three variables, $x_a$, $x_b$, $x_j$ is more tricky, it is:
 \begin{equation}
 \label{eq_x9}
  \frac{1-(a,j)}{x_a-x_j} \cdot  \frac{1-(b,j)}{x_b-x_j} +   \frac{1-(a,b)}{x_a-x_b} \cdot  \frac{1-(b,j)}{x_b-x_j} +   \frac{1-(a,j)}{x_a-x_j} \cdot  \frac{1-(b,a)}{x_b-x_a},
 \end{equation}
 where in each fraction we first apply the operator in numerator and then divide by the denominator. The desired relation \eqref{eq_Delta_commute} follows from the symmetry of \eqref{eq_x9} in $a\leftrightarrow b$, which is verified directly by moving all permutation operators to the right.
 The next useful identity, which is verified directly, is that for $a\ne b$
 \begin{equation}
 \label{eq_x14}
  [x_a,\Delta_b]=x_a \Delta_b- \Delta_b x_a=(a,b). 
 \end{equation}
 Another direct verification yields that for $a\ne b$ we have
 \begin{equation} \label{eq_x15}
  \left[\frac{\partial}{\partial x_a},\Delta_b\right]=\frac{1-(a,b)}{(x_a-x_b)^2} +\frac{1}{x_a-x_b} \left(\frac{\partial}{\partial x_a}-\frac{\partial}{\partial x_b}\right) (a,b).
 \end{equation}
 The last identity makes it clear that $\left[\tfrac{\partial}{\partial x_a},\Delta_b\right]= \left[\tfrac{\partial}{\partial x_b},\Delta_a\right]$. Combining with \eqref{eq_Delta_commute}, this implies commutativity of the Dunkl operators: $\D_a \D_b=\D_b \D_a$. Next, for each $ a \ne b $ we have
\begin{equation}
 \label{eq_commutator}
  [x_a \D_a, x_b \D_b]=  \theta (x_a \D_a - x_b \D_b) \cdot (a,b).
\end{equation}
Indeed, using \eqref{eq_x14} and commutativity of $x_a$ with $\frac{\partial}{\partial x_b}$, we have
$$
 x_a \D_a x_b \D_b - x_b \D_b x_a \D_a= x_a x_b \D_a \D_b - x_a (a,b) \D_b - x_b x_a \D_b \D_a +\theta x_b (a,b) \D_a.
$$
Cancelling the first and third terms and using $(a,b) \cdot  \D_b \cdot (a,b)=\D_a$ we arrive at \eqref{eq_commutator}. Using this identity we further compute
\begin{multline}
[x_a \D_a, (x_b \D_b)^m]=\sum_{k=0}^{m-1} (x_b \D_b)^k [x_a \D_a,x_b \D_b] (x_b\D_b)^{m-k-1}\\=\theta \sum_{k=0}^{m-1} (x_b \D_b)^k (x_a \D_a - x_b \D_b) \cdot (a,b) (x_b\D_b)^{m-k-1}= \\ \theta \sum_{k=0}^{m-1} (x_b \D_b)^k (x_a \D_a - x_b \D_b) \cdot (x_a\D_a)^{m-k-1} \cdot  (a,b)=
\theta \bigl((x_a \D_a)^m - (x_b \D_b)^m\bigr) \cdot (a,b).
\end{multline}
Iterating the same argument again, we get
\begin{multline*}
[(x_a \D_a)^l, (x_b \D_b)^m]=\sum_{k=0}^{l-1} (x_a \D_a)^k [x_a \D_a,(x_b \D_b)^m] (x_a\D_a)^{l-k-1}\\
=\theta  \left(  \sum_{k=m}^{m+l-1} (x_a \D_a)^{m+k} (x_b\D_b)^{m+l-k-1}  - \sum_{k=0}^{l-1} (x_a \D_a)^k (x_b \D_b)^{m+l-k-1}\right)  \cdot (a,b).
\end{multline*}
Interchanging $l$ and $m$ and using the skew-symmetry $[f,g]=-[g,f]$, we also have
\begin{multline*}
[(x_b \D_b)^l, (x_a \D_a)^m]=\theta  \left( \sum_{k=0}^{m-1} (x_a \D_a)^k (x_b \D_b)^{m+l-k-1}- \sum_{k=l}^{m+l-1} (x_a \D_a)^{l+k} (x_b\D_b)^{m+l-k-1}\right)  \cdot (a,b).
\end{multline*}
Summing the last two identities over all $a<b$, we arrive at
\begin{multline}
 \left[\sum_{a=1}^N(x_a \D_a)^l, \sum_{b=1}^N(x_b \D_b)^m\right]\\ =\theta \sum_{a<b} \left(\sum_{k=0}^{m-1} + \sum_{k=m}^{l+m-1} -\sum_{k=0}^{l-1} -\sum_{k=l}^{m+l-1} \right) (x_a \D_a)^k (x_b \D_b)^{m+l-1-k} (a,b)=0.\qedhere
\end{multline}
\end{proof}
\begin{lemma}\label{Lemma_CMS}
 The restriction of the operator $\P_2$ on the space of symmetric polynomials coincides with
 \begin{equation} \label{eq_CMS}
   \sum_{i=1}^N \left(x_i \frac{\partial}{\partial x_i}\right)^2 + \theta \sum_{i\ne j} \frac{ x_i(x_i+x_j)}{x_i-x_j} \frac{\partial}{\partial x_i}.
 \end{equation}
\end{lemma}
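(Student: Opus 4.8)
The plan is to verify that the two operators agree on an arbitrary symmetric polynomial $f\in\Lambda_N$; since $\P_m$ is $S_N$--equivariant, $\P_2 f$ is again symmetric, so the restriction is well defined, and it suffices to prove the pointwise identity. Writing $\partial_i=\frac{\partial}{\partial x_i}$ and recalling $\D_i=\partial_i+\theta\Delta_i$ with $\Delta_i=\sum_{j\ne i}\frac{1-(i,j)}{x_i-x_j}$ as in the proof of Lemma~\ref{Lemma_commute}, the key observation is that $\Delta_i$ annihilates symmetric polynomials: each summand equals $\frac{f-(i,j)f}{x_i-x_j}=0$ because $(i,j)f=f$. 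Consequently, for symmetric $f$ we have $x_i\D_i f=x_i\partial_i f$.

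Next I would apply $x_i\D_i$ a second time, obtaining $(x_i\D_i)^2 f = x_i\partial_i(x_i\partial_i f)+\theta\,x_i\,\Delta_i(x_i\partial_i f)$. The only step that is not immediate is evaluating $\Delta_i(x_i\partial_i f)$, since $x_i\partial_i f$ is no longer symmetric. Here one uses that the transposition $(i,j)$ both moves the multiplication variable, $x_i\mapsto x_j$, and turns $\partial_i$ into $\partial_j$; combined with $(i,j)f=f$ this gives $(i,j)\bigl(x_i\partial_i f\bigr)=x_j\partial_j f$, hence $\Delta_i(x_i\partial_i f)=\sum_{j\ne i}\frac{x_i\partial_i f-x_j\partial_j f}{x_i-x_j}$. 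Summing over $i$ then yields
\[
  \P_2 f=\sum_{i=1}^N (x_i\partial_i)^2 f+\theta\sum_{i\ne j}\frac{x_i\bigl(x_i\partial_i f-x_j\partial_j f\bigr)}{x_i-x_j}.
\]

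Finally I would symmetrise the last double sum. Splitting it as $\sum_{i\ne j}\frac{x_i^2\partial_i f}{x_i-x_j}-\sum_{i\ne j}\frac{x_ix_j\partial_j f}{x_i-x_j}$ and relabelling $i\leftrightarrow j$ in the second sum---which flips the sign of the denominator---turns that sum into $-\sum_{i\ne j}\frac{x_ix_j\partial_i f}{x_i-x_j}$, so the whole expression collapses to $\sum_{i\ne j}\frac{x_i^2+x_ix_j}{x_i-x_j}\partial_i f=\sum_{i\ne j}\frac{x_i(x_i+x_j)}{x_i-x_j}\partial_i f$, which is exactly \eqref{eq_CMS} (the individually singular terms on the right sum to a polynomial precisely because the left-hand side does). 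The computation is elementary throughout; the single point deserving care is the identity $(i,j)(x_i\partial_i f)=x_j\partial_j f$, which I would justify cleanly via the chain rule, invoking symmetry of $f$ only at the final stage.
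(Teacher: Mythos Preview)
Your argument is correct and follows essentially the same route as the paper's proof: both use that $\Delta_i$ annihilates symmetric $f$ so that the inner $x_i\D_i$ reduces to $x_i\partial_i$, then compute the action of the outer $x_i\D_i$ via the operator identity $(i,j)\circ x_i\partial_i = x_j\partial_j\circ(i,j)$ together with $(i,j)f=f$. The only difference is that you spell out the final $i\leftrightarrow j$ symmetrisation explicitly, whereas the paper subsumes it in the phrase ``summing over all $i$, we arrive at \eqref{eq_CMS}.''
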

\begin{remark} Changing the variables  $x_a=\exp(\ii z_a)$, conjugating with $\prod_{a<b} \sin^{\theta}\left( \frac{z_a-z_b}{2}\right)$ and shifting by a constant, \eqref{eq_CMS} turns into $-H$, where $H$ is the trigonometric version of the so-called Calogero-Moser-Sutherland Hamiltonian:
$$
 H=\sum_{i=1}^N \left(\frac{\partial}{\partial z_i}\right)^2 + \frac{\theta(\theta-1)}{2}\sum_{i<j}  \frac{1}{\sin^2 \left(\frac{z_a-z_b}{2}\right)}.
$$

\end{remark}
\begin{proof}[Proof of Lemma \ref{Lemma_CMS}] Since $1-(i,j)=0$ on symmetric polynomials, $(x_i \D_i)^2$ restricts as
$$
 \left(x_i \frac{\partial}{\partial x_i} + \theta x_i \sum_{j\ne i} \frac{1-(i,j)}{x_i-x_j}\right) \left(x_i \frac{\partial}{\partial x_i}\right)=
 \left(x_i \frac{\partial}{\partial x_i}\right)^2+\theta \sum_{ j\ne i} \frac{x_i}{x_i-x_j} \left[ x_i \frac{\partial}{\partial x_i} - x_j \frac{\partial}{\partial x_j} (i,j)\right].
$$
The permutation $(i,j)$ acts identically on symmetric polynomials and can be removed from the right-hand side of the last formula. Summing over all $i$, we arrive at \eqref{eq_CMS}.
\end{proof}
\begin{lemma} \label{Lemma_Symmetric_is_Jack}
 For identical representation $\tau=(N)$ the joint eigenfunction $F_{\lambda,(N),1}$ of operators $\P_m$ from Theorem \ref{Theorem_classification_of_ef} is (up to a constant factor) the symmetric Jack polynomial $J_\lambda(x_1,\dots,x_N;\, \theta)$.
\end{lemma}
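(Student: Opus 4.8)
The plan is to show that $F_{\lambda,(N),1}$ is a symmetric polynomial with the correct leading behaviour, and then to invoke the classical characterization of $J_\lambda$ via the Calogero-Moser-Sutherland operator. First I would observe that the trivial-isotypic component $V_{\lambda;(N)}$ is one-dimensional, spanned by the monomial symmetric polynomial $m_\lambda=\sum_{\gamma^+=\lambda}x^{\gamma}$. By Theorem~\ref{Theorem_classification_of_ef}, the eigenfunction $F:=F_{\lambda,(N),1}$ is homogeneous of degree $|\lambda|$, all its monomials $x^{\gamma}$ satisfy $\gamma^+\preceq\lambda$, and its part spanned by monomials of $V_\lambda$ lies in $V_{\lambda;(N)}$, so it equals $c\,m_\lambda$ for a scalar $c\neq 0$ (nonzero because $F$ must have a nonzero coefficient on some monomial of $V_\lambda$).

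Next I would show that $F$ is symmetric. Since the $\P_m$ are $S_N$-equivariant ($g\,\P_m=\P_m\,g$ for all $g\in S_N$), the symmetrization $\tilde F=\tfrac1{N!}\sum_{g\in S_N}g\circ F$ is again a joint eigenfunction of all $\P_m$ with the same eigenvalues, still supported on monomials with $\gamma^+\preceq\lambda$. As each $V_\mu$ is $S_N$-invariant, the $V_\lambda$-part of $\tilde F$ equals $\tfrac1{N!}\sum_g g\circ(c\,m_\lambda)=c\,m_\lambda\neq 0$. By the uniqueness established in the proof of Theorem~\ref{Theorem_classification_of_ef} (for fixed $\lambda$ and $\tau=(N)$ there is, up to a scalar, only one joint eigenfunction whose $V_\lambda$-part is a nonzero element of $V_{\lambda;(N)}$), $\tilde F$ is a nonzero multiple of $F$, so $F$ is symmetric. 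Consequently $F-c\,m_\lambda$ is a symmetric polynomial lying in $\bigoplus_{\mu\prec\lambda}V_\mu$, and since $\Lambda_N\cap V_\mu=\mathbb C\,m_\mu$ this forces $F=c\,m_\lambda+\sum_{\mu\prec\lambda}c_\mu m_\mu$ with $c\neq 0$.

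To finish, I would use that $\P_2$ restricted to $\Lambda_N$ is, by Lemma~\ref{Lemma_CMS}, the Calogero-Moser-Sutherland operator that Stanley and Macdonald employ to define the Jack polynomials: on the finite-dimensional space of degree-$|\lambda|$ symmetric polynomials it is diagonalized by $\{J_\mu:|\mu|=|\lambda|,\ \ell(\mu)\le N\}$ with pairwise distinct eigenvalues (see \cite{stanley1989some}, \cite[Chapter~VI]{macdonald1998symmetric}; here $\theta$ is formal or positive, which guarantees the separation). Expanding $F=\sum_\mu\beta_\mu J_\mu$ and applying $\P_2$, distinctness of the eigenvalues forces all but one $\beta_\mu$ to vanish; since $F\neq 0$ and its monomial-symmetric expansion has leading term $c\,m_\lambda$ with $c\neq 0$, the surviving index is $\mu=\lambda$, whence $F=\beta_\lambda J_\lambda$ as claimed.

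The step I expect to demand the most care is the symmetrization/uniqueness argument: one must confirm that Theorem~\ref{Theorem_classification_of_ef} genuinely pins $F_{\lambda,(N),1}$ down up to a scalar from the datum of its $V_\lambda$-leading part, and that Lemma~\ref{Lemma_CMS} is applied in exactly the form (triangularity in dominance order together with the eigenfunction property and the eigenvalue separation) under which the classical uniqueness of $J_\lambda$ is stated, rather than a conjugated or rescaled variant that would require extra bookkeeping.
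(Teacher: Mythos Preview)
Your overall strategy---show $F_{\lambda,(N),1}$ is symmetric with leading term $c\,m_\lambda$, then identify it with $J_\lambda$---matches the paper's. The paper argues symmetry a bit more directly (the $\P_m$ commute with $S_N$, hence preserve $\Lambda_N$, hence have symmetric eigenfunctions in $\bigcup_{\mu\preceq\lambda}V_\mu$; by the triangular dimension count these are forced to be the $F_{\lambda,(N),1}$), whereas you symmetrize and appeal to uniqueness. Both routes are fine, and you rightly flag that the uniqueness of the lift from $V_{\lambda;(N)}$ is the delicate point.

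The genuine difference is in the identification step. The paper does \emph{not} go through $\P_2$ and Lemma~\ref{Lemma_CMS}; instead it uses self-adjointness (Lemma~\ref{Lemma_self_adjoint}) to conclude that the symmetric eigenfunctions $\{F_{\lambda,(N),1}\}_\lambda$ are an \emph{orthogonal} family in $\Lambda_N$ that is triangular against the $m_\lambda$, and then invokes the orthogonality characterization of Jack polynomials. Your route via $\P_2$ is a legitimate alternative, but the specific claim you make is false: the eigenvalues of $\P_2$ on degree-$n$ symmetric polynomials are \emph{not} pairwise distinct, even for formal or positive $\theta$. For $N=4$, take $\lambda=(4,2,1,1)$ and $\mu=(3,3,2,0)$: both have $\sum_i\lambda_i^2=22$ and $\sum_i(N-i)\lambda_i=17$, hence $\mathrm{eig}_2(\lambda)=\mathrm{eig}_2(\mu)$. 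So the step ``distinctness of the eigenvalues forces all but one $\beta_\mu$ to vanish'' does not go through as written.

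The fix is immediate and already sitting in your argument: you have established $F=c\,m_\lambda+\sum_{\mu\prec\lambda}c_\mu m_\mu$ and that $F$ is a $\P_2$-eigenfunction. The characterization in \cite{stanley1989some}, \cite[Chapter~VI, (4.5)--(4.7)]{macdonald1998symmetric} says that $J_\lambda$ is the \emph{unique} (up to scalar) symmetric $\P_2$-eigenfunction of this triangular form, because the eigenvalues separate whenever $\mu\prec\lambda$ (not for arbitrary pairs). Invoke that directly rather than global distinctness, and your proof is complete.
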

\begin{proof}
 As in the proof of Theorem \ref{Theorem_classification_of_ef}, we note that the linear space $\bigoplus\limits_{\mu\mid \mu\preceq \lambda} V_{\mu}$
 is invariant  both for $\P_m$ and for the symmetric group $S_N$. Because $S_N$-action commutes with $\P_m$ action, $\P_m$ preserves the subspace of symmetric polynomial inside $\bigoplus\limits_{\mu\mid \mu\preceq \lambda} V_{\mu}$. Hence, $\P_m$ must have some symmetric eigenfunctions in  $\bigoplus\limits_{\mu\mid \mu\preceq \lambda} V_{\mu}$ and then by triangularity consideration, the unique symmetric eigenfunction with leading monomial in $V_\lambda$ has to be $F_{\lambda,(N),1}$. We conclude that $F_{\lambda,(N),1}$ is symmetric. Since $\P_m$ is self-adjoint by Lemma \ref{Lemma_self_adjoint}, its eigenfunctions are orthogonal with respect to scalar product of that lemma. We conclude that as $\lambda$'s vary, the polynomials $F_{\lambda,(N),1}$ form an orthogonal basis of the space $\Lambda_N$ of symmetric polynomials in $N$ variables and this basis is related to monomial symmetric polynomials $m_\lambda$ by a triangular transformation. Being such an orthogonal basis is a property which uniquely determines Jack polynomials and can be taken as one of their equivalent definitions, see \cite{stanley1989some} or \cite[Section 6.10]{macdonald1998symmetric} and note that they use $\alpha=1/\theta$ as the parameter.
\end{proof}

\bibliographystyle{alpha}
\bibliography{P_eigenvalues}

\end{document}